\newtheorem{thm}{Theorem}[section]
\newtheorem{cor}[thm]{Corollary}   
\newtheorem{lem}[thm]{Lemma}   
\newtheorem{prop}[thm]{Proposition}
\newtheorem{defn}[thm]{Definition}
\newcommand{\be}{\begin{equation}}
\newcommand{\ee}{\end{equation}}
\newcommand{\ptGHto}{\stackrel { \textrm{ptGH}}{\longrightarrow} }
\newcommand{\GHto}{\stackrel { \textrm{GH}}{\longrightarrow} }
\newcommand{\dil}{\textrm{dil}}
\begin{document}

\title{The Checkered Smocked Space and its Tangent Cone}

\author{Victoria Antonetti\\Maziar Farahzad \\Ajmain Yamin}

\thanks{}

\keywords{}



\begin{abstract} 
Smocked metric spaces were first defined in \cite{Sormani-Kazaras-Students1} and it was proved that if a norm on the Euclidean space uniformly estimates the pseudometric of a smocked metric space, then the tangent cone at infinity is unique and is a norm vector space with that estimating norm. In this paper, we explicitly calculate the norm approximating the pseudometric of the checkered smocked space and find the tangent cone at infinity.
\end{abstract}

\maketitle
\section{Introduction}

The notion of a smocked metric space was first introduced in \cite{Sormani-Kazaras-Students1}. In that paper, a variety of smocked metric spaces were considered and proved to have unique tangent cones at infinity.
As the main theorem, it was proved that if a norm on the Euclidean space uniformly estimates the pseudometric of a smocked metric space, then the tangent cone at infinity is unique and is a norm vector space with that estimating norm. The formal theorem is in section \ref{sec:tancone}. 

In this paper, we consider the checkered smocked space $X_H$, and explicitly find its estimating norm and prove it too has a unique tangent cone at infinity. The rest of the paper is organized as follows. In section \ref{sec:Background}, we review the definition of a smocked metric space, the definition of checkered smocked space $X_H$, and the definition of a tangent cone at infinity of a smocked metric space. In section \ref{sec:length}, we estimate the length of geodesics in $X_H$ to find the estimating norm. Finally, the uniqueness of the tangent cone at infinity of $X_H$ is proved in section \ref{sec:tancone}.  This tangent cone at infinity is a normed plane with norm $F:\mathbb{E}^2\rightarrow[0,\infty)$ given by 
\be\label{equ:norm}
F(x_1, x_2) = \frac{\sqrt 2 }{3}(|x_1|+|x_2|) + \frac{2-\sqrt 2}{3}||x_1|-|x_2||.
\ee

\section{\label{sec:Background}Background}

We recall the definition of a smocked metric space and the checkered smocked space $X_H$, as introduced in \cite{Sormani-Kazaras-Students1}.
\begin{defn} \label{defn-smock}
Given a Euclidean space, $\mathbb{E}^N$, and a finite or countable collection of
disjoint connected compact sets called {\bf smocking intervals} or {\bf smocking stitches}, 
\be
\mathcal{I}=\{I_j: \, j \in J\},
\ee
with a positive {\bf smocking separation factor},
\be \label{s-factor}
\delta=\min\{|z-z'|: \, z\in I_j, \, z'\in I_{j'},\, j\neq j' \in J\} >0,
\ee
one can define the {\bf smocked
metric space}, $(X,d)$, in which each stitch is viewed as a single point.
\be
X = \left\{ x: \, x \in {\mathbb{E}^N}\setminus S\right\} \cup \mathcal{I}
\ee
where $S$ is the {\bf smocking set} or {\bf smocking pattern}:
\be
S= \bigcup_{j \in J} I_j .
\ee
We have a {\bf smocking map} 
$\pi: \mathbb{E}^N \to X $ defined by
\be
 \pi(x) = \begin{cases} 
          x & \textrm{ for }x \in {\mathbb{E}^N}\setminus S \\
          I_j&  \textrm{ for } x\in I_j \textrm{ and } j\in J.
                 \end{cases}
\ee
The {\bf smocked distance function}, $d: X\times X \to [0,\infty)$, is defined for $y, x\notin \pi(S)$, and stitches
$I_m$ and $I_k$ as follows:
\begin{eqnarray*}
d(\,x,\,y\,) &=& \min \left\{d_0(x,y), d_1(x,y), d_2(x,y), d_3(x,y), ...\right\} \\
d(\,x,\, I_k) &=& \min \{ d_0(x,z),  d_1(x,z), d_2(x,z), d_3(x,z), ...:\, z\in I_k\} \\
d(I_m,I_k) &=& \min \{ d_0(z',z), d_1(z',z), d_2(z',z), d_3(z',z), ... \,:z'\in I_m,\, z\in I_k \}
\end{eqnarray*}
where $d_k$ are the sums of lengths of segments that jump to and between $k$ stitches:
\begin{eqnarray*}
d_0(v,w) &=& |v-w|\\
d_1(v,w) &=&  \min\{ |v-z_1|+|z'_1-w|:\, z_1, z_1'\in I_{j_1}, \, j_1 \in J\}\\
d_2(v,w) &=& \min\{ |v-z_1|+|z'_1-z_2|+|z'_2-w|:\, z_i, z'_i\in I_{j_i}, \, j_1\neq j_2 \in J\}\\
d_k(v,w) &=& \min \{  |v-z_1|+\sum_{i=1}^{k-1} |z'_i-z_{i+1}|+|z'_k-w|:\, z_i, z'_i\in I_{j_i}, \, j_1\neq \cdots \neq j_k \in J.
\end{eqnarray*}
We define the {\bf smocking pseudometric} $\bar{d}: {\mathbb{E}^N}\times {\mathbb{E}^N} \to [0, \infty)$
to be
$$
\bar{d}(v,w)= d(\pi(v), \pi(w))=\min \{d_k(v',w'): \,\pi(v)=\pi(v'),\, \pi(w)=\pi(w'),\, k\in {\mathbb{N}}\}.
$$
We will say the smocked space is {\bf parametrized by points in the stitches}, if 
\be\label{param-by-points}
J \subset {\mathbb{E}}^N \textrm{ and } \forall j \in J \,\, j \in I_j.
\ee
\end{defn}

\begin{defn}\label{defn-H}
The {\bf checkered smocked space} $(X_H, d_H)$ is a smocked plane defined as in Definition 3.1 of \cite{Sormani-Kazaras-Students1}. 
We start with the Euclidean plane ${\mathbb{E}}^2$.
We define our index set:
$$
J_H = J_H^- \cup J_H^\vert
$$
where
$$
J_H^-= 3\mathbb{Z} \times 3\mathbb{Z}
$$
and
$$
J_H^\vert=(3\mathbb{Z} +1.5) \times (3\mathbb{Z} + 1.5)
$$

We define our intervals which are horizontal (of length 1):
$$
I_{(j_1,j_2)}= [j_1-0.5,j_1+0.5]\times \{j_2\} \textrm{ when } (j_1,j_2) \in J_H^-
$$
and vertical (of length 1):
$$
I_{(j_1,j_2)}= \{j_1\}\times [j_2-0.5,j_2+0.5] \textrm{ when } (j_1,j_2) \in J_H^\vert
$$

See Figure \ref{fig:patternH}.

\begin{figure}[h]
\includegraphics[scale=.1]{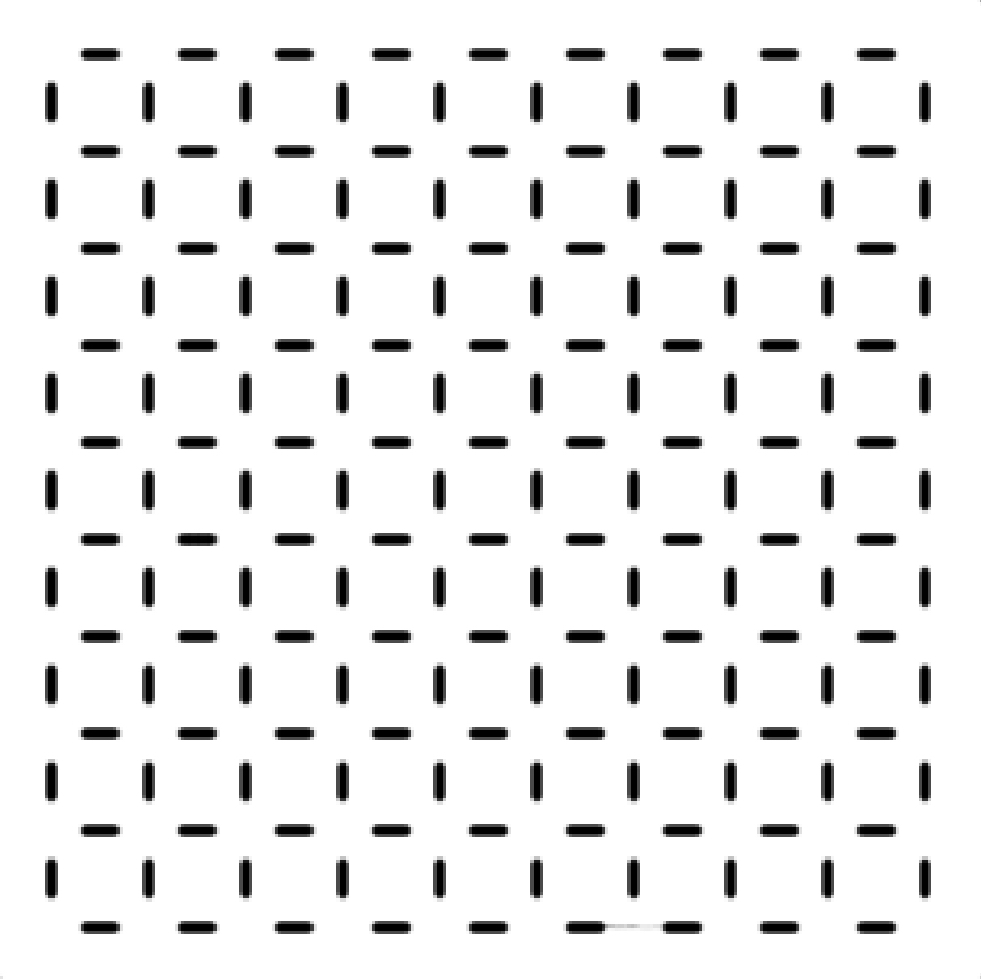}
\caption{Smocking Pattern H.}
\label{fig:patternH}
\end{figure}
\end{defn}

Gromov-Hausdorff convergence was first defined by David Edwards in \cite{Edwards}. It was
rediscovered by Gromov in \cite{Gromov-metric}. The text \cite{BBI} gives an excellent introduction to this topic.  

\begin{defn} \label{defn-GH}
We say a sequence of compact metric spaces
\be
(X_j, d_j) \GHto (X_\infty, d_\infty)
\ee
if and only if
\be
d_{GH}((X_j,d_j), (X_\infty, d_\infty)) \to 0,
\ee
where the Gromov-Hausdorff distance is defined by
\be
d_{GH}(X_j, X_\infty) = \inf \{d^Z_H(\varphi_j(X_j), \varphi_\infty(X_\infty)): \,\, Z,\,\, \varphi_j: X_j \to Z\}
\ee
where the infimum is over all compact metric spaces, $Z$, and over all
distance preserving maps $\varphi_j: X_j\to Z$:
\be
d_Z(\varphi_j(a), \varphi_j(b))=d_j(a,b) \,\,\,\forall a,b \in X_j.
\ee
The Hausdorff distance is defined by
\be
d_H(A_1, A_2) = \inf\{r:\,\, A_1\subset T_r(A_2) \textrm{ and } A_2\subset T_r(A_1) \}.
\ee
\end{defn}

Since we are considering unbounded metric spaces, we must consider the following definition by Gromov:

\begin{defn}\label{defn-ptGH}
If one has a sequence of complete noncompact metric spaces, $(X_j, d_j)$, and points
$x_j \in X_j$, one can define pointed GH convergence:
\be
(X_j, d_j,x_j) \ptGHto (X_\infty, d_\infty, x_\infty)
\ee
if and only if
for every radius $r>0$, the closed balls of radius $r$ in $X_j$ converge in the GH sense as metric spaces
with the restricted distance to closed balls in $X_\infty$:
\be
d_{GH}((\bar{B}_r(x_j)\subset X_j,d_j), (\bar{B}_r(x_\infty)\subset X_\infty, d_\infty)) \to 0.
\ee
\end{defn}

The idea behind the tangent cone at infinity of a metric space is probing the asymptotic behavior of the metric space as one zooms out. For this purpose, we consider a sequence of rescalings of a metric space and see if this sequence or a subsequence of it converges in GH sense. If it does, then one obtains a space which is a tangent cone at infinity.
\begin{defn} \label{defn-tan-cone}
A complete noncompact metric space with infinite diameter, $(X, d_X)$,
has a tangent cone at infinity, $(Y, d_Y)$, if there is a sequence of
rescalings, $R_j \to \infty$, and points, $x_0\in X$ and $y_0\in Y$, such that
\be
(X, d/R_j, x_0) \ptGHto (Y, d_Y, y_0).
\ee
\end{defn}

\section{\label{sec:length}Length of Geodesics in $X_H$}

\subsection{Preliminary definitions and notation}

\begin{defn}\label{defn:path}
A {\bf path} from $a$ to $b$ in $X_H$ is a list of directed segments $P=(P_i)_{i=1}^n$ in $\mathbb{E}^2$ where
\begin{itemize}
    \item the initial point of $P_1$ is $\pi^{-1}(a)$
    \item the terminal point of $P_n$ is $\pi^{-1}(b)$
    \item the terminal point of $P_i$ lies in the same smocking stitch as the initial point of $P_{i+1}$ for all $i = 1,\dots,n-1$.
\end{itemize}

The number $n$ is called the {\bf combinatorial length} of $P$. The {\bf length} of $P$, denoted $L(P)$, is the sum of the euclidean lengths of the segments $P_i$.  A {\bf geodesic} in $X_H$ is a path from $a$ to $b$ such that $L(P) = d_H(a,b)$.
\end{defn}

\begin{defn}

For each $j\in J$
        \begin{itemize}
            \item let $\nearrow_j$ be the shortest directed segment from $I_{j+(-1.5,-1.5)}$ to $I_j$
            \item let $\searrow_j$ be the shortest directed segment from $I_{j+(-1.5,1.5)}$ to $I_j$
            \item let $\nwarrow_j$ be the shortest directed segment from $I_{j+(1.5,-1.5)}$ to $I_j$
            \item let $\swarrow_j$ be the shortest directed segment from $I_{j+(1.5,1.5)}$ to $I_j$
        \end{itemize}
For each $j\in J_{-}$
        \begin{itemize}
            \item let $\rightarrow_j$ be the shortest directed segment from $I_{j+(-3,0)}$ to $I_j$
            \item let $\leftarrow_j$ be the shortest directed segment from $I_{j+(3,0)}$ to $I_j$
        \end{itemize}
For each $j\in J_{\vert}$
        \begin{itemize}
            \item let $\uparrow_j$ be the shortest directed segment from $I_{j+(0,-3)}$ to $I_j$
            \item let $\downarrow_j$ be the shortest directed segment from $I_{j+(0,3)}$ to $I_j$
        \end{itemize}

We call $\nearrow_j$, $\nwarrow_j$, $\searrow_j$, and $\swarrow_j$ {\bf diagonals}.
We call $\leftarrow_j$ and $\rightarrow_j$  {\bf horizontals}.
We call $\uparrow_j$ and $\downarrow_j$  {\bf verticals}.
We refer to the collection of all such {\bf network parts} as the {\bf network} $N$, $$N = \bigcup_{j\in J} \{\nearrow_j,\nwarrow_j, \searrow_j,\swarrow_j \} \cup \bigcup_{j\in J_{-}} \{\leftarrow_j, \rightarrow_j \} \cup \bigcup_{j\in J_{\vert}} \{\uparrow_j, \downarrow_j\}$$ 
\end{defn}

\begin{defn}
A {\bf network path} is a path which consists of network parts.
\end{defn}

\begin{prop}\label{prop-networkPathLength}
If $P$ is a network path with $D$ number of diagonals and $P$ has combinatorial length $n$ then the length of $P$ is $$L(P) = D\sqrt 2 + 2(n-D).$$
\end{prop}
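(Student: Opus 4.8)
The plan is to compute the Euclidean length of each individual network part and then sum over the parts of $P$. First I would observe that every network part is, by definition, a \emph{shortest} directed segment between two specified smocking stitches, so I need to compute these minimal distances explicitly from the coordinates given in Definition~\ref{defn-H}. For a diagonal such as $\nearrow_j$ going from $I_{j+(-1.5,-1.5)}$ to $I_j$, the two stitches are perpendicular unit segments (one horizontal, one vertical, since $J_H^-$ and $J_H^\vert$ alternate), whose centers differ by $(1.5,1.5)$; a short computation of the distance between these two segments should give exactly $\sqrt 2$ (the nearest endpoints are offset by $(1,1)$). For a horizontal $\rightarrow_j$ from $I_{j+(-3,0)}$ to $I_j$, both stitches are horizontal unit segments on the same horizontal line with centers $3$ apart, so the gap between their nearest endpoints is $3 - 0.5 - 0.5 = 2$; similarly each vertical has length $2$. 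Thus every diagonal contributes $\sqrt 2$ and every horizontal or vertical contributes $2$ to $L(P)$.

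Next, since $P$ has combinatorial length $n$, it consists of exactly $n$ network parts; $D$ of them are diagonals and the remaining $n-D$ are horizontals or verticals. Because $L(P)$ is defined as the sum of the Euclidean lengths of the constituent segments, additivity immediately yields
\[
L(P) = D\cdot\sqrt 2 + (n-D)\cdot 2 = D\sqrt2 + 2(n-D).
\]

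The only real content is the length computations for the network parts, and the one requiring the most care is the diagonal case: one must verify that the minimal distance between a horizontal unit stitch and a vertical unit stitch whose centers are offset by $(\pm1.5,\pm1.5)$ is realized by the obvious pair of endpoints and equals $\sqrt2$, rather than something smaller achieved by an interior point of one of the segments. I expect this to be a routine but slightly fiddly planar distance minimization — parametrize one segment, minimize the squared distance to the other segment, and check the minimizer lies at an endpoint. The horizontal and vertical cases are immediate since the stitches are collinear. Once these three length values are in hand, the proposition follows by the additivity of length over the parts of a path.
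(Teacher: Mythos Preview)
Your argument is correct and is exactly the intended one: each diagonal has Euclidean length $\sqrt 2$, each horizontal or vertical has length $2$, and summing over the $n$ parts gives $D\sqrt 2 + 2(n-D)$. The paper itself states this proposition without proof, treating it as immediate once the network parts are defined, so your write-up simply supplies the routine verification (including the endpoint check for the diagonal case) that the paper leaves to the reader.
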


When decomposing a network path with known initial point into network parts, we do not need to label the index of each particular network part. The terminal end of one network part determines the beginning end of the next.  For example the network path from $I_0$ to $I_{(6,3)}$ given by $\nearrow_{(1.5,1.5)} \nearrow_{(3,3)} \rightarrow_{(6,3)}$ may be denoted by $\nearrow \nearrow \rightarrow$, provided we specify that the starting point is $I_0$.  Further more, write $\nearrow^2$ to denote $\nearrow \nearrow$, etc, and define $\nearrow^0$ to be the stationary path, etc.
\begin{defn}
The {\bf Awesome path} $A_j$ from $I_0$ to $I_j$ is constructed as follows:
\begin{itemize}
\item if $0=j_1 < j_2$ then $A_j= \nearrow \uparrow^{\frac{j_2}{3}-1}\nwarrow$ 
\item if $1.5 \leq j_1\leq j_2$, $A_j = \nearrow \uparrow^{\frac{j_2 - j_1}{3}}\nearrow^{\frac{2}{3}j_1-1}$
\item if $0\leq j_2 < j_1 $ then $A_j = \rightarrow^{\frac{j_1 - j_2}{3}}\nearrow^{\frac{2}{3}j_2}$
\item if $j_2<0\leq j_1$ then $A_j = R_X\left(A_{(|j_1|,|j_2|)}\right)$ where $R_X$ is reflection over the X-axis
\item if $j_1<0$ then $A_j = R_Y\left(A_{(|j_1|,j_2)}\right)$ where $R_Y$ is reflection over the Y-axis
\end{itemize}
as in figure \ref{Awesome_paths}.
\end{defn}
 \begin{figure}[h!]
            \centering
            \includegraphics[scale=.2]{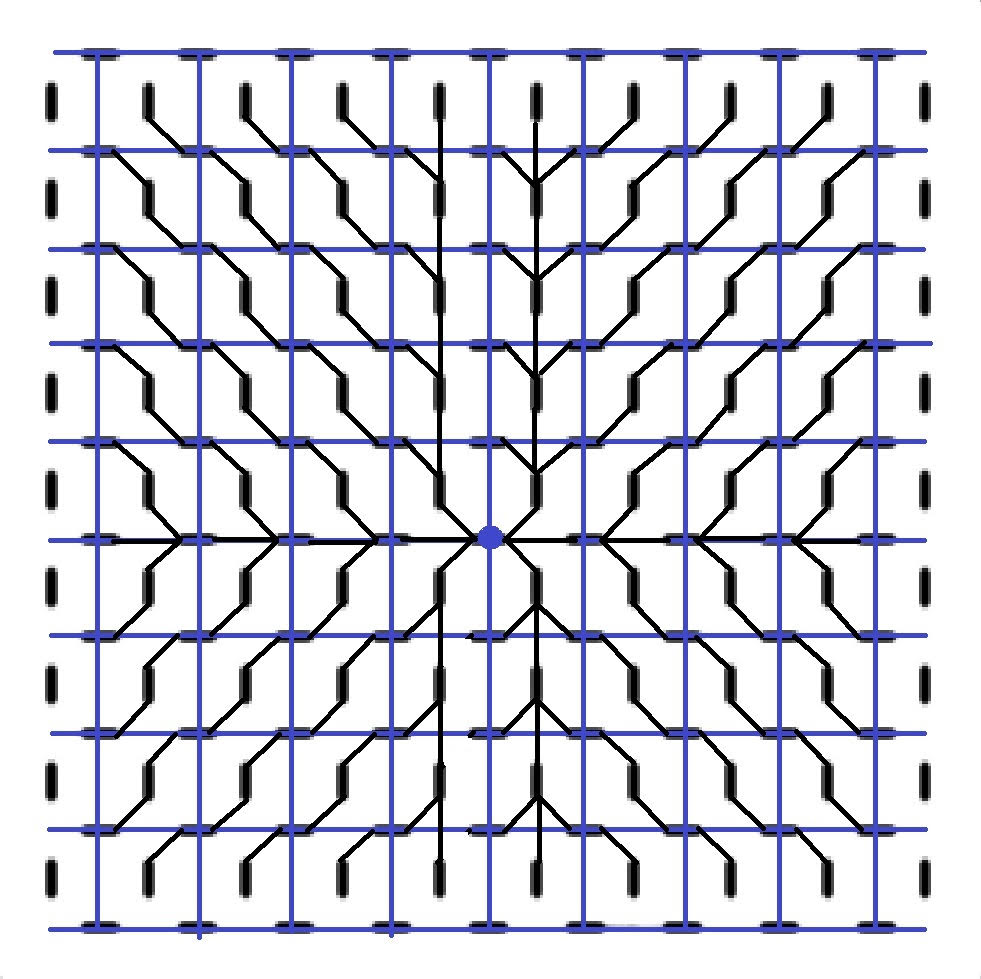}
            \caption{Awesome Paths}
            \label{Awesome_paths}
        \end{figure} 
\begin{prop}
The length of an Awesome path $A_j$ is $$L(A_j) = 
\begin{cases}
      \frac{2\sqrt{2}}{3} \min (|j_1|,|j_2|)+ \frac{2}{3}||j_1|-|j_2|| & \text{if } j_1 \neq 0,\\
      2\sqrt{2} + \frac{2}{3}|j_2| -2 & \text{if } j_1 = 0.\\
\end{cases}$$
\end{prop}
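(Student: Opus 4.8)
The plan is to invoke Proposition \ref{prop-networkPathLength} in each branch of the definition of $A_j$: every Awesome path is, by construction, a network path, so it suffices in each case to read off from the defining word its combinatorial length $n$ and its number of diagonals $D$, and then to simplify $D\sqrt{2} + 2(n-D)$.

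First I would handle the three ``primary'' cases directly. When $0 = j_1 < j_2$, the word $\nearrow \uparrow^{j_2/3 - 1} \nwarrow$ has $D = 2$ and $n = 1 + (j_2/3 - 1) + 1 = j_2/3 + 1$, so $L(A_j) = 2\sqrt{2} + 2(j_2/3 - 1) = 2\sqrt{2} + \tfrac{2}{3}j_2 - 2$, which is the asserted value since $j_2 = |j_2| > 0$. When $1.5 \le j_1 \le j_2$, the word $\nearrow \uparrow^{(j_2 - j_1)/3} \nearrow^{2j_1/3 - 1}$ has $D = 1 + (2j_1/3 - 1) = 2j_1/3$ and $n = 1 + (j_2 - j_1)/3 + (2j_1/3 - 1) = (j_1 + j_2)/3$, giving $L(A_j) = \tfrac{2\sqrt{2}}{3}j_1 + \tfrac{2}{3}(j_2 - j_1)$; here $\min(|j_1|,|j_2|) = j_1$ and $\bigl||j_1| - |j_2|\bigr| = j_2 - j_1$, so this matches. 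The case $0 \le j_2 < j_1$ is handled the same way: $\rightarrow^{(j_1 - j_2)/3} \nearrow^{2j_2/3}$ has $D = 2j_2/3$ and $n = (j_1 + j_2)/3$, whence $L(A_j) = \tfrac{2\sqrt{2}}{3}j_2 + \tfrac{2}{3}(j_1 - j_2)$, which is correct and lies in the first branch of the formula because $j_1 > 0$. Along the way one records that the exponents $j_2/3 - 1$, $(j_2 - j_1)/3$, $2j_1/3 - 1$, $2j_2/3$ are nonnegative integers; this follows from $j \in J_H$ — which forces both coordinates into $3\Z$ or both into $3\Z + 1.5$, so that their difference lies in $3\Z$ — together with the inequality defining the case.

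For the last two cases I would use that $R_X$ and $R_Y$ are Euclidean isometries, hence preserve the Euclidean length $L$, and that they permute the network parts among themselves — for instance $R_X$ interchanges $\nearrow$ with $\searrow$ and $\nwarrow$ with $\swarrow$ and preserves the classes of horizontals and verticals, and similarly for $R_Y$ — so that $R_X\bigl(A_{(|j_1|,|j_2|)}\bigr)$ and $R_Y\bigl(A_{(|j_1|,j_2)}\bigr)$ are again network paths of the same length. Thus $L(A_j) = L\bigl(A_{(|j_1|,|j_2|)}\bigr)$ when $j_2 < 0 \le j_1$, and $L(A_j) = L\bigl(A_{(|j_1|,j_2)}\bigr)$ when $j_1 < 0$; applying the primary cases — iterating the reflection once more if the inner index still has a negative coordinate — and noting that replacing the coordinates by their absolute values changes neither $\min(|j_1|,|j_2|)$ nor $\bigl||j_1|-|j_2|\bigr|$ completes the argument. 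One should also note that the proposition concerns $j \neq 0$, and that the $j_1 = 0$ branch of the formula is reached precisely through the first primary case, directly or after applying $R_X$.

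I expect the only genuine work to be the exponent bookkeeping in the three primary cases and the verification that the case list defining $A_j$ exhausts every relevant $j$ with the reflection reductions always landing in an already-settled case; given Proposition \ref{prop-networkPathLength}, there is no conceptual obstacle.
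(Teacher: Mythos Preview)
Your proposal is correct. The paper states this proposition without proof, treating it as an immediate computation; your argument supplies exactly the routine verification one expects, reading off $D$ and $n$ from each defining word, applying Proposition~\ref{prop-networkPathLength}, and reducing the reflected cases to the primary ones via the isometry invariance of $L$.
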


\begin{defn}\label{def-generalized-Awesome}
The {\bf Awesome path} $A_k^j$ from $I_j$ to $I_k$ is constructed as follows:\\
 Let $T_1$ be an isometry of the Euclidean plane given by 
$$T_1= \begin{cases}
      R_D & \text{if $j\in J_J^\vert$,}\\
      id & \text{otherwise.}
\end{cases}$$

Now let $T_2$ denote the translation map on the Euclidean plane taking $T_1(j)$ to $0$. Let $A$ be the Awesome path from $I_0$ to $I_{(x,y)}$ where $(x,y) = T_2T_1(j)$.  We define the $i$'th part of $A_k^j$ as $$(A_k^j)_i = T_1^{-1}T_2^{-1}(A_i).$$

\end{defn}

\begin{prop}
The length of an Awesome path $A_k^j$ is $$L(A_k^j) = 
\begin{cases}
      \frac{2\sqrt{2}}{3} \min (|j_1-k_1|,|j_2-k_2|)+ \frac{2}{3}||j_1-k_1|-|j_2-k_2|| & \text{if } j_1 \neq 0,\\
      2\sqrt{2} + \frac{2}{3}|j_2-k_2| -2 & \text{if } j_1 = k_1.\\
\end{cases}$$
\end{prop}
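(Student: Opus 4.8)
The plan is to reduce this to the earlier Proposition on the length of the ordinary Awesome path $A_j$, by exploiting that, according to Definition~\ref{def-generalized-Awesome}, $A_k^j$ is exactly the image of an ordinary Awesome path under a Euclidean isometry that is also a symmetry of the smocking pattern $H$. Granting this, the length is preserved automatically and only a substitution remains.

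First I would set $\Phi := (T_2T_1)^{-1} = T_1^{-1}T_2^{-1}$ and let $A$ be the Awesome path from $I_0$ to $I_{(x,y)}$ with $(x,y) = T_2T_1(k)$, so that $(A_k^j)_i = \Phi(A_i)$ for every $i$. Since $T_1 \in \{\mathrm{id},R_D\}$ and $T_2$ is a translation, $\Phi$ is an isometry of $\mathbb{E}^2$ and therefore preserves the Euclidean length of every segment, whence
\[
L(A_k^j) \;=\; \sum_i |\Phi(A_i)| \;=\; \sum_i |A_i| \;=\; L(A) \;=\; L\big(A_{(x,y)}\big).
\]
(Alternatively: $\Phi$ maps network parts to network parts without changing the number of diagonals, so one may instead quote Proposition~\ref{prop-networkPathLength}.)

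Next I would check that $\Phi$ permutes the smocking stitches of $X_H$, so that $A_k^j$ is indeed a path from $I_j$ to $I_k$ in the sense of Definition~\ref{defn:path}. When $T_1 = \mathrm{id}$ (the case $j\in J_H^-$) this is the assertion that translation by $-j$, with $j\in J_H^- = 3\mathbb{Z}\times 3\mathbb{Z}$, carries $J_H^-$ onto $J_H^-$, $J_H^\vert$ onto $J_H^\vert$, and preserves the horizontal/vertical orientation of each stitch — all immediate from Definition~\ref{defn-H}. When $T_1 = R_D$ (the case $j\in J_H^\vert$) one uses that $R_D$ is a symmetry of $H$ interchanging the horizontal stitches indexed by $J_H^-$ with the vertical stitches indexed by $J_H^\vert$; hence $R_D(j)\in J_H^-$, so $T_2$ is translation by $-R_D(j)\in 3\mathbb{Z}\times 3\mathbb{Z}$ and is again a symmetry by the previous case. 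In either case $\Phi$ permutes $\mathcal I$ with $\Phi(I_0) = I_j$ and $\Phi(I_{(x,y)}) = I_k$, so the first segment of $A_k^j$ begins in $I_j$, the last ends in $I_k$, and consecutive segments still meet, because $\Phi$ carries the stitch joining $A_i$ to $A_{i+1}$ onto the one joining $(A_k^j)_i$ to $(A_k^j)_{i+1}$.

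Finally I would evaluate $L\big(A_{(x,y)}\big)$ using the earlier Proposition and rewrite the answer in terms of $j$ and $k$. When $T_1 = \mathrm{id}$ one has $(x,y) = k - j = (k_1-j_1,\,k_2-j_2)$; when $T_1 = R_D$ one has $(x,y) = R_D(k) - R_D(j)$, and since the linear part of $R_D$ interchanges the two coordinate axes (up to sign), the unordered pair $\{|x|,|y|\}$ equals $\{|j_1-k_1|,\,|j_2-k_2|\}$ in both cases. On the generic branch the earlier formula depends only on this unordered pair, producing $\tfrac{2\sqrt2}{3}\min(|j_1-k_1|,|j_2-k_2|)+\tfrac{2}{3}\big||j_1-k_1|-|j_2-k_2|\big|$; the degenerate branch (one coordinate of $T_2T_1(k)$ vanishing) produces the second case in the same way. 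The one step carrying real content — and the one I expect to be the main obstacle — is the symmetry verification in the third paragraph, namely recognizing that both $R_D$ and the translations $T_2$ belong to the symmetry group of pattern $H$, which is where Definition~\ref{defn-H} is used in an essential way; the remaining work (Euclidean isometries preserve length; substitute; keep the case conditions and the degenerate branch straight) is routine.
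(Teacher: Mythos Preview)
The paper states this proposition without proof; the only justification it offers is the remark, made later in the proof of Theorem~\ref{thm-faster}, that ``the mapping $T_2T_1$ in Definition~\ref{def-generalized-Awesome} is an isometry.'' Your proposal makes this implicit argument explicit and is essentially the same approach: reduce to the formula for $L(A_{(x,y)})$ via the Euclidean isometry $\Phi=(T_2T_1)^{-1}$, then substitute. Your additional verification that $\Phi$ permutes the stitches of pattern $H$ (so that $A_k^j$ is genuinely a path in $X_H$, not merely a list of segments of the right lengths) is a detail the paper omits but which is indeed needed, and your case analysis of $T_1\in\{\mathrm{id},R_D\}$ is the right way to check it.

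One small caveat worth flagging: the case labels in the proposition as stated in the paper are imprecise (the first branch reads ``$j_1\neq 0$'' rather than the intended condition on $T_2T_1(k)$, and the degenerate branch ``$j_1=k_1$'' only literally covers the $j\in J_H^-$ situation). Your discussion of the degenerate branch correctly identifies the true condition as the vanishing of the first coordinate of $T_2T_1(k)$, which translates to $j_1=k_1$ when $T_1=\mathrm{id}$ and to $j_2=k_2$ when $T_1=R_D$; you may want to state this distinction explicitly rather than saying it ``produces the second case in the same way.''
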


\subsection{Awesome Paths are more efficient than Euclidean Paths}

\begin{defn}
The {\bf Euclidean Path} from interval $I_j$ to $I_k$ in $X_H$, denoted $E_k^j$, is the path consisting of a single directed segment which minimizes the Euclidean length.  Notice the length of $E_k^j$, as defined in Definition \ref{defn:path} is $L(E_k^j) = d_E(I_j,I_k)$ where $d_E$ is the Euclidean metric on $\mathbb{E}^2$. 
\end{defn}

\begin{thm} \label{thm-faster}
The length of the Awesome path from interval $I_j$ to $I_k$ is less than or equal to the length of the Euclidean of the Euclidean path from interval $I_j$ to $I_k$:
$$L(A_k^j) \leq L(E_k^j).$$
\end{thm}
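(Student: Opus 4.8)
I would reduce to the case $j=0$ using a symmetry of the smocking set, and then verify the inequality directly from the closed form for the length of an Awesome path. By Definition~\ref{def-generalized-Awesome} we have $A_k^j=\Phi^{-1}(A)$, where $\Phi=T_2\circ T_1$ is a Euclidean isometry and $A$ is the Awesome path from $I_0$ to $I_{(x,y)}$ with $(x,y)=\Phi(k)$. One first checks that $\Phi$ is a symmetry of the smocking set $S$: when $j\in J_H^\vert$ the reflection $T_1=R_D$ interchanges horizontal and vertical stitches while preserving each of the two sublattices, and $T_2$ carries $J_H^-$ onto $J_H^\vert$ and back, so $\Phi(S)=S$; when $j\in J_H^-$, $\Phi=T_2$ is a translation by a vector of $3\mathbb{Z}\times 3\mathbb{Z}$, which also preserves $S$. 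Hence $\Phi(I_j)=I_0$ and $\Phi(I_k)=I_{(x,y)}$ is again a standard stitch, and since $\Phi$ preserves Euclidean length, $L(A_k^j)=L(A)$ and $L(E_k^j)=d_E(I_j,I_k)=d_E(I_0,I_{(x,y)})$. So it suffices to prove $L(A)\le d_E(I_0,I_{(x,y)})$; moreover the axis reflections $R_X,R_Y$ fix $I_0$ and permute the standard stitches, and the formula for $L(A)$ depends only on $(|x|,|y|)$, so one may assume $x,y\ge 0$.

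I would then split into cases by the type and position of the target stitch, reading off $d_E(I_0,I_{(x,y)})$ from the nearest pair of endpoints each time. If $(x,y)=(0,0)$ the statement is trivial. If $(x,y)\in J_H^-$ and $x=0$ (so $y\ge 3$), then $I_{(x,y)}$ lies directly above $I_0$, $d_E=y$ and $L(A)=2\sqrt2+\tfrac23 y-2$, so the claim reduces to $y\ge 6\sqrt2-6$, which holds since $6\sqrt2-6<3$. If $(x,y)\in J_H^-$ and $x\ge 3$, then $I_0$ and $I_{(x,y)}$ are parallel horizontal segments, $d_E=\sqrt{(x-1)^2+y^2}$ and $L(A)=\tfrac{2\sqrt2}{3}\min(x,y)+\tfrac23|x-y|$. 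Finally, if $(x,y)\in J_H^\vert$ (so $x,y\ge\tfrac32$), then $I_{(x,y)}$ is vertical, $d_E=\sqrt{(x-\tfrac12)^2+(y-\tfrac12)^2}$ and $L(A)$ is as in the previous case.

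The two substantive cases are the last two, which I would handle uniformly by squaring both sides (splitting $\min(x,y)$ into $x\ge y$ and $x\le y$, which merely relabels the variables). The inequality becomes $Q(x,y)\ge 0$ for a quadratic polynomial whose quadratic part is $x^2+y^2-\bigl(\tfrac23 x+\tfrac{2\sqrt2-2}{3}y\bigr)^2$ (or the same with $x$ and $y$ exchanged). Its Hessian has determinant $\tfrac49(8\sqrt2-7)>0$ and positive trace, so $Q$ is strictly convex; one computes its unconstrained minimizer explicitly and observes that it has a coordinate strictly less than $\tfrac32$, hence lies outside the closed convex region under consideration. Therefore the minimum of $Q$ over that region is attained on a boundary ray, and on each such ray $Q$ restricts to a one-variable convex quadratic which is easily checked to be nonnegative on the relevant interval; the equality cases occur as $(x,y)\to(3,0)$ in the horizontal case and $(x,y)\to(\tfrac32,\tfrac32)$ in the perpendicular case. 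This yields $Q\ge 0$ throughout and proves the theorem.

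I expect the main obstacle to be bookkeeping rather than anything conceptual: computing $d_E(I_0,I_{(x,y)})$ correctly in each geometric configuration (the two stitches can be parallel horizontal segments or mutually perpendicular, which changes which endpoints realize the distance) and keeping the $\sqrt2$ arithmetic in the quadratic estimates tidy. Note that the inequality is sharp precisely at nearest-neighbor stitches, so the crude estimate $d_E\ge\|(x,y)\|-1$ does not suffice and the exact distances in the last two cases are genuinely needed.
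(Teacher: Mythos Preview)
Your reduction to $j=0$ via the isometry $T_2T_1$ and the three-way case split (target in $J_H^\vert$; target in $J_H^-$ with $x\neq 0$; target in $J_H^-$ with $x=0$) exactly match the paper's structure (Lemmas~\ref{lem-H-ineq-1}--\ref{lem-H-ineq-3}), and your computed values of $d_E(I_0,I_{(x,y)})$ and $L(A)$ agree with the paper's. The difference is only in how the two quadratic inequalities are verified. For the perpendicular case the paper fixes $r$, observes that the level set $\{L(A)=r\}$ inside $\{1\le x\le y\}$ is a segment $[P,Q]$, and shows $[P,Q]$ lies in a half-plane disjoint from the open disk $B_r(0)$; for the parallel case it bounds $RHS-LHS$ from below by crude monotonicity ($Ax^2+By^2-Cxy\ge Ax^2+Bx^2-Cy^2$ on $x\le y$) and checks positivity of the resulting one-variable expressions. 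Your uniform convexity argument---positive-definite Hessian, locate the unconstrained minimizer outside the admissible region, then minimize along boundary rays---is a valid alternative and arguably more systematic; its cost is that the admissible regions differ between the two cases (for $J_H^-$ one needs $x\ge 3$ and $y\in\{0\}\cup[3,\infty)$, not $x,y\ge\tfrac32$), so the ``coordinate $<\tfrac32$'' observation must be replaced by the correct boundary in that case. The paper's geometric picture for Lemma~\ref{lem-H-ineq-1} avoids computing the critical point at all, at the price of being less reusable.
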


The proof of Theorem \ref{thm-faster} will be done in 3 steps.  First we prove Lemma \ref{lem-H-ineq-1} which handles the case $j\in J_{|}$. Then we prove Lemma \ref{lem-H-ineq-2} which handles the case $j\in J_{-}$, $j_1 \neq 0$. Finally we prove Lemma \ref{lem-H-ineq-3} which handles the case $j_1=0$. 

\begin{lem}\label{lem-H-ineq-1}
For any $x,y\in \mathbb{R}$, $1.5 \leq|x| $, and $1.5\leq |y|$,
$$\left(\frac{2\sqrt{2}}{3} \min (|x|,|y|)+ \frac{2}{3}||x|-|y||\right)^2 \leq \left(|x|-\frac{1}{2}\right)^2 + \left(|y|-\frac{1}{2}\right)^2.$$
This become equality only when $1.5= |x| = |y|$.
\end{lem}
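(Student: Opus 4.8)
The plan is to reduce the inequality to a pointwise statement in two real variables $a = |x| \ge 1.5$ and $b = |y| \ge 1.5$, and then exploit symmetry to assume $a \le b$. Under this assumption the left-hand side becomes
\[
\left(\tfrac{2\sqrt{2}}{3} a + \tfrac{2}{3}(b-a)\right)^2 = \tfrac{4}{9}\bigl(b + (\sqrt{2}-1)a\bigr)^2,
\]
so the claim is equivalent to
\[
\tfrac{4}{9}\bigl(b + (\sqrt{2}-1)a\bigr)^2 \le \bigl(a - \tfrac12\bigr)^2 + \bigl(b - \tfrac12\bigr)^2,
\]
with equality to be characterized. First I would expand both sides and collect everything on one side to obtain a quadratic polynomial $Q(a,b) \ge 0$; the leading quadratic form is $\bigl(a-\tfrac12\bigr)^2+\bigl(b-\tfrac12\bigr)^2 - \tfrac49\bigl(b+(\sqrt2-1)a\bigr)^2$, and since $\tfrac49 < 1$ and the cross term is controlled, one expects this form to be positive-definite (or at least behave well on the relevant quadrant), which would make $Q$ convex and reduce the problem to checking it on the boundary of the region $\{a \ge 1.5,\ b \ge 1.5,\ a \le b\}$.

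The key steps, in order, are: (1) normalize via $a=|x|, b=|y|$ and reduce by symmetry to $1.5 \le a \le b$; (2) rewrite the LHS in the closed form above; (3) form $Q(a,b) = \bigl(a-\tfrac12\bigr)^2+\bigl(b-\tfrac12\bigr)^2 - \tfrac49\bigl(b+(\sqrt2-1)a\bigr)^2$ and verify its quadratic part is positive semidefinite by computing the relevant $2\times2$ symmetric matrix and checking its trace and determinant (using $(\sqrt2-1)^2 = 3 - 2\sqrt2$); (4) conclude $Q$ is convex, so its minimum over the closed convex region is attained on the boundary — i.e. on the edge $a = 1.5$ (with $b \ge 1.5$), on the diagonal $a = b$ ($\ge 1.5$), or at the corner $a=b=1.5$; (5) on each such edge $Q$ restricts to a one-variable convex quadratic, so check it is nonnegative by evaluating at the endpoint $a=b=1.5$ and checking the derivative's sign there (or minimizing directly); (6) verify $Q(1.5,1.5) = 0$ and that along both boundary edges the only zero is at $(1.5,1.5)$, which gives the equality characterization.

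An alternative to the convexity argument, if the quadratic form turns out not to be positive-definite, is to substitute $a = 1.5 + s$, $b = 1.5 + t$ with $s,t \ge 0$, $s \le t$, expand $Q$ as a polynomial in $s,t$, and show all coefficients of the resulting expression are nonnegative (after possibly completing a square in the degree-two part); since $Q(1.5,1.5)=0$ should hold, there will be no constant term, and nonnegativity of the linear and quadratic coefficients on the cone $0 \le s \le t$ would finish it, with equality forcing $s=t=0$.

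The main obstacle I anticipate is step (3)–(4): the quadratic form $\bigl(a-\tfrac12\bigr)^2+\bigl(b-\tfrac12\bigr)^2 - \tfrac49\bigl(b+(\sqrt2-1)a\bigr)^2$ has determinant
\[
\Bigl(1 - \tfrac49(\sqrt2-1)^2\Bigr)\Bigl(1 - \tfrac49\Bigr) - \tfrac{16}{81}(\sqrt2-1)^2,
\]
and one must check carefully (with the irrational $\sqrt2$) whether this is positive; if it is only nonnegative or marginally negative, the clean "minimum on the boundary" reduction needs the restriction $a,b \ge 1.5$ to rescue it, and pinning down exactly where equality occurs — showing it is \emph{only} at $a=b=1.5$ and not along some line — requires the careful endpoint analysis in step (6). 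The arithmetic with $\sqrt2$ is routine but must be done without slips, since the equality case is the delicate part of the statement.
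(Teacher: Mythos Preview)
Your main line has a genuine gap at step (4): convexity of $Q$ does \emph{not} imply that its minimum over the region $\{1.5 \le a \le b\}$ is attained on the boundary. That principle applies to the \emph{maximum} of a convex function over a compact convex set, not the minimum; a strictly convex $Q$ has a unique global minimizer, and nothing prevents it from lying in the interior of your region. To salvage this route you would need to locate the unconstrained critical point of $Q$ and verify it falls outside $\{1.5 \le a \le b\}$, which is extra work you have not outlined.

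Your alternative, however, is correct and is actually the cleaner path. With $a = 1.5 + s$, $b = 1.5 + t$, $0 \le s \le t$, one checks $Q(1.5,1.5) = 0$, the linear coefficients $\partial_a Q|_{(1.5,1.5)} = \tfrac{2(2\sqrt2-1)}{3}$ and $\partial_b Q|_{(1.5,1.5)} = \tfrac{2(3-2\sqrt2)}{3}$ are both strictly positive, and the quadratic form (unchanged by the shift) has positive trace and determinant $\tfrac{8\sqrt2-7}{9} > 0$, hence is positive definite. This gives $Q > 0$ for $(s,t) \ne (0,0)$ in the first quadrant, settling both the inequality and the equality case. This is a genuinely different route from the paper: there the substitution $x' = \min(|x|,|y|) - \tfrac12$, $y' = \max(|x|,|y|) - \tfrac12$ reduces to $1 \le x' \le y'$, and then for each level $r$ the paper argues geometrically that the line segment $\{\text{LHS} = r^2\}$ lies outside the open disk $\{\text{RHS} < r^2\}$ by exhibiting a separating half-plane tangent to the circle of radius $r$ and checking the two segment endpoints lie on the far side. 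Your algebraic expansion is more direct and handles the equality case automatically; the paper's level-set picture is more visual but requires carefully verifying the endpoint conditions with the irrational $\sqrt2$.
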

\begin{proof}
It is enough to show \[
\left(\frac{2\sqrt{2}}{3} \left(x+ \frac{1}{2}\right)+ \frac{2}{3}(y-x)\right)^2 \leq x^2 + y^2 \tag{A}
\] is true for all $1\leq x\leq y$. 
This is because for any $x,y\in \mathbb{R}$ with $1.5 \leq|x| $, and $1.5\leq |y|$ we observe that $(x',y') = \left(\min\left(|x|,|y|\right)-\frac{1}{2}, \max\left(|x|,|y|\right)-\frac{1}{2}\right)$ satisfies $1\leq x'\leq y'$, and plugging $(x',y')$ into inequality A gives the desired inequality.

Let $r\in \mathbb{R}$.
$S_r = \{(x,y)\in \mathbb{E}^2 \vert 1 \leq x \leq y, LHS(x,y) = r^2\}$.  $S_r$ is a line segment with closed endpoints $P = \left(\frac{3r-\sqrt 2}{2\sqrt 2},\frac{3r-\sqrt 2 }{2 \sqrt 2}\right)$ and $Q=\left(1,\frac{3r}{2} - \frac{3\sqrt{2}}{2} + 1\right)$.  Now consider $C_r = \{(x,y)\in \mathbb{E}^2 \vert 1 \leq x \leq y, RHS(x,y) = r^2\}$.  $C_r$ is an arc of a circle. We claim for any point $(x,y)$ of the segment $S_r$, the connecting segment between $(x,y)$ and the origin intersects the arc $C_r$ at a point. Equivalently, the entire line segment $S_r=[P,Q]$ lies outside $B_r(0)$, the Euclidean open ball of radius $r$ centered at the origin; i.e $[P,Q] \cap B_r(0) = \varnothing$. This implies, $LHS(x,y) \leq RHS(x,y)$.

We consider the half-plane $H=\{(x,y) + (0, \lambda) \vert (x,y) \in L, \lambda \geq 0\} = \{(x,y) + (\lambda,  \lambda) \vert (x,y) \in L, \lambda \geq 0\}$ where $L$ is the line $L=\{(x,y)\in \mathbb{E}^2 \vert y = -x + r\sqrt 2\}$. Since $P=\left(\frac{3r-\sqrt 2}{2\sqrt 2}, \frac{3r-\sqrt 2}{2\sqrt 2}\right) = (\frac{r\sqrt 2}{2},\frac{r\sqrt 2}{2}) + (\lambda,\lambda)$ where $\lambda = \frac{1}{2}\left(\left(\frac{3}{\sqrt 2}-\sqrt 2\right)r -1\right) \geq 0$, we have $P\in H$.  Since $Q = (1,\frac{3r}{2}-\frac{3\sqrt 2}{2} + 1) = (1, 
-1+r\sqrt 2) + (0,\lambda)$ where $\lambda = \frac{3r}{2}-\frac{3\sqrt 2}{2} + 1 +1 - r\sqrt 2 \geq 0$, we have $Q\in H$.  Since $H$ is convex, $[P,Q] \subset H$.  Therefore $[P,Q] \cap H^c = \varnothing$.

Note $\mathbb{E}^2 = H \sqcup H^c$. It is easy to show that $B_r(0) \subset H^c$.  Finally, $[P,Q] \cap B_r(0) = \varnothing$.
\end{proof}

\begin{lem}\label{lem-H-ineq-2}
For any $(x,y)\in J_{-}$, $3 \leq |x|$
$$\left(\frac{2\sqrt{2}}{3} \min (|x|,|y|)+ \frac{2}{3}||x|-|y||\right)^2 \leq \left(|x|-1\right)^2 +|y|^2.$$
This becomes equality only when $(x,y) = (3,0)$ or $(-3,0)$.
\end{lem}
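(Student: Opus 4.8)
The plan is to follow the same template that worked for Lemma~\ref{lem-H-ineq-1}. Writing $a = |x|$ and $b = |y|$, the case $(x,y) \in J_-$ with $j_1 \neq 0$ forces $a \geq 3$ and $b \in 3\mathbb{Z}$, so in particular $a$ and $b$ are multiples of $3$; but we may as well prove the cleaner continuous statement. By the symmetry $\min(a,b) = \tfrac12(a+b) - \tfrac12|a-b|$ the left side only depends on $(a,b)$ through $a$ and $b$, and without loss of generality I would reduce to two sub-cases according to whether $b \leq a$ or $b > a$; in fact when $b \ge a \ge 3$ the right side $(a-1)^2 + b^2$ is larger than in Lemma~\ref{lem-H-ineq-2}'s companion estimate and the inequality is loose, so the binding case is $0 \le b \le a$, where the LHS becomes $\bigl(\tfrac{2\sqrt2}{3} b + \tfrac23(a-b)\bigr)^2$.

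So the core reduction is: show
\[
\left(\tfrac{2}{3}a - \tfrac{2-2\sqrt2}{3}\,b\right)^2 \le (a-1)^2 + b^2 \qquad \text{for all } a \ge 3,\ 0 \le b \le a.
\tag{$\ast$}
\]
Here I would again introduce, for each level $r \ge \sqrt2$ roughly, the level set $S_r = \{(a,b) : LHS = r^2,\ 0 \le b \le a\}$, which is a segment (the LHS is an affine function of $(a,b)$, so $\{LHS = r^2\}$ is a line, intersected with the triangular region), with explicit endpoints obtained by setting $b=0$ and $b=a$. The right side $RHS(a,b) = (a-1)^2 + b^2$ is the squared Euclidean distance from $(1,0)$, so $\{RHS = r^2\}$ is a circle of radius $r$ about $(1,0)$. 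The claim $(\ast)$ is exactly that each segment $S_r$ lies outside the open disc $B_r(1,0)$. As in Lemma~\ref{lem-H-ineq-1}, I would exhibit a half-plane $H$ bounded by the line $L$ tangent to (or supporting) the appropriate circle, chosen so that $B_r(1,0) \subset H^c$ and both endpoints of $S_r$ lie in $H$; convexity of $H$ then gives $S_r \subset H$, hence $S_r \cap B_r(1,0) = \varnothing$. The slope of the LHS level line is fixed (it is $\tfrac{2}{3} : -\tfrac{2-2\sqrt2}{3}$, i.e.\ slope $\tfrac{2}{2\sqrt2 - 2} = \tfrac{1}{\sqrt2-1} = \sqrt2+1$ in the $b$-vs-$a$ sense), so $L$ can be taken parallel to $S_r$ and the same half-plane direction works uniformly in $r$; one then just checks the two endpoint membership conditions and the containment $B_r(1,0)\subset H^c$, each a one-line computation, and verifies the equality case pins down $b=0$, $a=3$.

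The main obstacle, as in the previous lemma, is bookkeeping rather than ideas: identifying which vertex of the triangle $\{0 \le b \le a,\ a \ge 3\}$ the segment $S_r$ terminates at for the relevant range of $r$ (the constraint $a\ge 3$ may clip one endpoint of $S_r$, changing $Q$ from the $b=a$ intersection to the $a=3$ edge), computing the endpoints $P$ and $Q$ cleanly in terms of $r$, and choosing the supporting line $L$ and writing both endpoints in the form $(\text{point on }L) + \lambda\,(\text{inward normal})$ with $\lambda \ge 0$. I would also double-check the claimed reduction that the case $b \ge a$ is non-binding, since if it were not, a second, symmetric half-plane argument would be needed. Once the geometry is set up the verification is routine arithmetic with $\sqrt2$, exactly parallel to the proof of Lemma~\ref{lem-H-ineq-1}.
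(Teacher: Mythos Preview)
Your proposal takes a genuinely different route from the paper. You plan to replicate the geometric level-set/half-plane argument of Lemma~\ref{lem-H-ineq-1}, now with circles centred at $(1,0)$ rather than the origin. The paper abandons that geometry here and proceeds purely algebraically: after disposing of $y=0$ via the one-line check $\tfrac{2}{3}|x|\le |x|-1$ for $|x|\ge 3$, it splits into the two orderings $3\le x\le y$ (called B1) and $3\le y\le x$ (called B2), expands $RHS-LHS$ in each case as $Ax^2+By^2-Cxy-Dx+E$ with explicit positive constants, and then uses the ordering to replace the mixed and linear terms, bounding the expression below by a separable quantity that is checked to be positive.

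Your approach would give a uniform picture with Lemma~\ref{lem-H-ineq-1} and makes the equality case $(a,b)=(3,0)$ geometrically transparent (it is where the boundary edge $a=3$ touches the circle), at the cost of the endpoint bookkeeping you flag and, more importantly, of the case $b\ge a$. You hope to dismiss that case as ``non-binding'', but the paper does not: it is exactly inequality B1, treated on equal footing with B2. You are right that no equality occurs there, but the inequality itself still needs an argument, so your ``second, symmetric half-plane argument'' (or some substitute) would in fact be required. The paper's algebraic route is less illuminating but self-contained, handles the two orderings symmetrically, and avoids any case analysis on how the region $\{a\ge 3,\ 0\le b\}$ clips the level segments $S_r$.
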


\begin{proof}
 If $y = 0$, the inequality simplifies to $\frac{2}{3}|x| \leq |x|-1$ which is true for all $3 \leq |x|$.
 
 Suppose $y\neq 0$, so $3\leq |y|$.  In order to prove the , it is enough to show \[\left(\frac{2\sqrt{2}}{3}x+ \frac{2}{3}(y-x)\right)^2 \leq \left(x-1\right)^2 +y^2\tag{B1}
\] is true for all $3\leq x \leq y$ and 
\[\left(\frac{2\sqrt{2}}{3}y+ \frac{2}{3}(x-y)\right)^2 \leq \left(x-1\right)^2 +y^2\tag{B2}
\] is true for all $3\leq y \leq x$.

We start with inequality B1.  $$RHS - LHS = Ax^2 + By^2 - Cxy - Dx + E $$ where $A = 1 - \frac{8}{9} - \frac{4}{9} + \frac{8\sqrt 2}{9}$, $B = \frac{5}{9}$, $C = \frac{8}{9}(\sqrt 2 -1)$, $D = 2$, $E=1$.  Notice $A, B, C, D, E > 0$.  Since $0\leq x \leq y$ $$Ax^2 + Bx^2 -Cy^2 - Dy + E \leq RHS - LHS$$
Let $f(x) = (A+B)x^2 + E$ and $g(y) = Cy^2 + Dy$.  Notice $f(x) > g(x)$, $\forall x, y\in \mathbb{R}$.  Thus $$0 < f(x) - g(y) \leq  RHS - LHS.$$  Hence $RHS > LHS$, as required.

Now consider inequality B2.  
$$RHS - LHS = Ax^2 + By^2 - Cxy -Dx + E$$ where $A=\frac{5}{9}$, $B=\frac{5}{9} + \frac{8}{9}(\sqrt 2 -1)$, $C = \frac{8}{9}(\sqrt 2 - 1)$, $D=2$, $E=1$.  Notice $A,B,C,D,E > 0$.  Since $0\leq y\leq x$ 
$$Ay^2 + By^2 - Cx^2 -Dx + E \leq RHS - LHS$$  Let $f(y) = (A+B)y^2 + E$ and $g(x) = Cx^2 +Dx$.  Notice $f(y) > g(x)$, $\forall x, y\in \mathbb{R}$.  Thus $$0 \leq f(y) - g(x) \leq RHS - LHS$$ Hence $RHS > LHS$, as required.
\end{proof}

\begin{lem}\label{lem-H-ineq-3}
For any $y\in \mathbb{R}$, $3\leq|y|$
$$2\sqrt{2} + \frac{2}{3}|y| -2 < |y|.$$
\end{lem}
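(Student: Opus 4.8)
The plan is to reduce the statement to a one-variable inequality and then to a single elementary numerical estimate. Writing $t = |y|$, the hypothesis becomes $t \geq 3$, and the claimed inequality $2\sqrt{2} + \tfrac{2}{3}t - 2 < t$ is equivalent, after subtracting $\tfrac{2}{3}t$ from both sides, to $2\sqrt{2} - 2 < \tfrac{1}{3}t$, i.e.\ to $t > 6\sqrt{2} - 6$. Thus the whole lemma amounts to comparing the constant $6\sqrt{2}-6$ with the threshold $3$.

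So it would suffice to check that $6\sqrt{2} - 6 < 3$, which is the only quantitative input needed. This follows from $\sqrt{2} < \tfrac{3}{2}$ (equivalently $2 < \tfrac{9}{4}$), since then $6\sqrt{2} < 9$, hence $6\sqrt{2} - 6 < 3 \leq t$. Chaining this with the reduction of the previous paragraph yields the desired strict inequality for every $y$ with $|y| \geq 3$.

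The only ``obstacle'' here is really the observation $\sqrt{2} < \tfrac{3}{2}$, which is immediate; no estimate on geodesic lengths or on the network is required. Note also that the inequality is strict and has no equality case, since $t \geq 3$ stays strictly above the threshold $6\sqrt{2} - 6 \approx 2.49$ --- in contrast with Lemmas \ref{lem-H-ineq-1} and \ref{lem-H-ineq-2}, where equality is attained at boundary configurations. This matches the role of Lemma \ref{lem-H-ineq-3} as the $j_1 = 0$ case in the proof of Theorem \ref{thm-faster}.
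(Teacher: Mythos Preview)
Your argument is correct: the reduction to $t>6\sqrt{2}-6$ via $t=|y|$ and the check $\sqrt{2}<\tfrac{3}{2}$ are exactly what is needed, and the strictness observation is accurate. The paper itself omits the proof of this lemma entirely (it passes directly to the proof of Theorem~\ref{thm-faster}), so there is nothing to compare against; your write-up supplies the routine verification the authors left implicit.
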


\begin{proof}[Proof of Theorem \ref{thm-faster}]
By Lemma \ref{lem-H-ineq-1}, \ref{lem-H-ineq-2}, and \ref{lem-H-ineq-3}, we get $L(A_j)\leq L(E_j^0)$. Since the mapping $T_2T_1$ in Definition \ref{def-generalized-Awesome} is an isometry, we get $L(A_k^j)\leq L(E_l^j)$, as required.
\end{proof}

\begin{cor}\label{cor-network}
Geodesics in $X_H$ are network paths.
\end{cor}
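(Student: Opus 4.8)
The plan is to turn an arbitrary geodesic into a network path segment by segment, using Theorem \ref{thm-faster} to control the length at each step. Let $P=(P_i)_{i=1}^n$ be a geodesic between two smocking stitches $I_j$ and $I_k$ (this is the setting in which the statement makes sense, since by definition a network path is a concatenation of network parts and every network part runs from one stitch to another). Let $I_{m_1},\dots,I_{m_{n+1}}$ be the sequence of stitches visited by $P$, so that $I_{m_1}=\pi(\pi^{-1}(I_j))$, $I_{m_{n+1}}$ is the stitch of $I_k$, and each segment $P_i$ runs from a point of $I_{m_i}$ to a point of $I_{m_{i+1}}$.

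First I would note that, since $P_i$ is a single directed segment joining the stitches $I_{m_i}$ and $I_{m_{i+1}}$, the definition of the Euclidean path gives $L(P_i)\ge L(E_{m_{i+1}}^{m_i})$, and Theorem \ref{thm-faster} gives $L(E_{m_{i+1}}^{m_i})\ge L(A_{m_{i+1}}^{m_i})$. Then I would form the concatenation $P' = A_{m_2}^{m_1}A_{m_3}^{m_2}\cdots A_{m_{n+1}}^{m_n}$: each Awesome factor $A_{m_{i+1}}^{m_i}$ is built from network parts and terminates in $I_{m_{i+1}}$, which is the starting stitch of the next factor, so $P'$ is a genuine network path from $I_j$ to $I_k$, with $L(P')=\sum_i L(A_{m_{i+1}}^{m_i})\le\sum_i L(P_i)=L(P)=d_H(I_j,I_k)$. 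Since $P'$ is itself a path from $I_j$ to $I_k$ we also have $L(P')\ge d_H(I_j,I_k)$, hence $L(P')=d_H(I_j,I_k)$ and $P'$ is a network geodesic. This already yields that there is always a network geodesic between any two stitches.

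To get that \emph{every} geodesic is a network path, I would then exploit that $L(P')=L(P)$ forces every inequality above to be an equality, so $L(P_i)=L(E_{m_{i+1}}^{m_i})=L(A_{m_{i+1}}^{m_i})$ for each $i$. The equality $L(E_{m_{i+1}}^{m_i})=L(A_{m_{i+1}}^{m_i})$ is precisely the equality case of Lemmas \ref{lem-H-ineq-1}--\ref{lem-H-ineq-3} (together with the base cases in the proof of Theorem \ref{thm-faster}), which forces $I_{m_i}$ and $I_{m_{i+1}}$ to be adjacent, i.e.\ related by one of the index shifts $(\pm 1.5,\pm 1.5)$, $(\pm 3,0)$, or $(0,\pm 3)$; after discarding zero-length segments and observing that a length-minimizing path cannot contain a positive-length segment returning to the stitch it just left (ruling out the shift $(0,0)$), these are the only possibilities. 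In each of these adjacency cases a short coordinate check shows the shortest segment between the two stitches is unique and equals the corresponding network part $\nearrow$, $\nwarrow$, $\searrow$, $\swarrow$, $\leftarrow$, $\rightarrow$, $\uparrow$, or $\downarrow$, so the equality $L(P_i)=L(E_{m_{i+1}}^{m_i})$ forces $P_i$ to be that network part, and $P$ is a network path.

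The main obstacle is the equality/uniqueness bookkeeping in the last paragraph: one must use the strictness clauses of Lemmas \ref{lem-H-ineq-1}--\ref{lem-H-ineq-3} to know that a non-adjacent jump is \emph{strictly} longer than its Awesome replacement (so a geodesic can never use such a jump), and then carry out the finite case analysis confirming that for each admissible adjacency the minimizing connecting segment is exactly the named network part. Both parts are routine — the first is already essentially recorded in the lemmas, and the second is elementary planar geometry on the stitch endpoints — but they are what upgrades the clean "there exists a network geodesic" argument to the full statement.
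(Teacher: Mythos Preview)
Your argument is correct and follows the same route as the paper: take a geodesic, replace each segment $P_i$ by the Awesome path between the same pair of stitches, and use Theorem~\ref{thm-faster} to control the length. The paper presents this as a one-line contradiction (if some $P_i$ is not a network part, the replacement is strictly shorter), whereas you carry it out constructively and then analyze the equality case. In fact your version is the more careful one: the paper's proof asserts a \emph{strict} inequality ``the Awesome path \ldots\ is shorter in length than $P_i$'' while Theorem~\ref{thm-faster} only gives $\le$; the missing strictness is exactly what you supply by invoking the equality clauses of Lemmas~\ref{lem-H-ineq-1}--\ref{lem-H-ineq-3} to force adjacency, and then checking that for adjacent stitches the minimizing segment is the named network part. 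So the two proofs are the same idea, with yours filling in the bookkeeping the paper leaves implicit.
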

\begin{proof}
Suppose $P$ is a shortest path between $I_j$ and $I_k$ but is not a network path.  Then there exists a part of $P$, say $P_i$, such that $P_i$ is not a network part.  By theorem \ref{thm-faster}, we know the Awesome path $A_k^j$ between these intervals is shorter in length than $P_i$, the shortest Euclidean path between intervals.  Now $P' = \prod_{l=1}^n P_l'$ where $$P_l' =
\begin{cases}
      A_k^j &\text{if }l=i \\
      P_l &\text{otherwise}
\end{cases}$$ is a shorter path.
\end{proof}

\subsection{Monotone Network Paths}

\begin{defn}
A {\bf monotone network path} $P$ is a path in which all directed segments are either
\begin{itemize}
    \item $\uparrow, \rightarrow, \nearrow$
    \item $\uparrow, \leftarrow, \nwarrow$
    \item $\downarrow, \leftarrow, \swarrow$
    \item or $\downarrow, \rightarrow, \searrow$.
\end{itemize}
\end{defn}

It can be shown that when $j_1\neq 0$ that any geodesic from $I_0$ to $I_j$ is a monotone network path.

\begin{lem}\label{lem-combLength=indexDepth}
If $P$ is a monotone network path from $I_0$ to $I_j$ with combinatorial length $n$, then $n = d(j)$, where {\bf depth} of $j$, denoted $d(j)$, is defined as $d(j)= \frac{|j_1|+|j_2|}{3}$.
\end{lem}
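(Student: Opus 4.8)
The plan is to track how the displacement of a path changes as we traverse it, one network part at a time, and compare the net displacement against the total "budget" of displacement available from a monotone path. First I would set up the key observation: each of the six allowed monotone parts moves the current point by a fixed vector between smocking stitches. A horizontal part $\rightarrow$ or $\leftarrow$ changes the index $j$ by $(\pm 3, 0)$, a vertical part $\uparrow$ or $\downarrow$ changes it by $(0,\pm 3)$, and a diagonal part $\nearrow, \nwarrow, \searrow, \swarrow$ changes it by $(\pm 1.5, \pm 1.5)$. In every case, the change in the quantity $|j_1| + |j_2|$ caused by a single part is \emph{at most} $3$, and this is why $d(j) = \frac{|j_1|+|j_2|}{3}$ will be a lower bound for $n$ and the monotonicity will force it to be exactly $n$.

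The main point is that monotonicity prevents cancellation. Consider the four allowed "types" of monotone path; in the first type all parts are among $\{\uparrow, \rightarrow, \nearrow\}$. Starting from $I_0$, every such part increments both coordinates of the running index by a nonnegative amount (a vertical adds $3$ to the second coordinate, a horizontal adds $3$ to the first, a diagonal adds $1.5$ to each), so the running index stays in the closed first quadrant throughout, and in that quadrant $|j_1| + |j_2| = j_1 + j_2$ increases by exactly $3$ at each step regardless of which of the three part types is used. Hence after $n$ steps the terminal index $j$ satisfies $|j_1| + |j_2| = 3n$, i.e. $n = d(j)$. The other three types are handled identically by symmetry: each type confines the running index to one of the four closed quadrants, and within that quadrant every allowed part increases $|j_1| + |j_2|$ by exactly $3$. (One should check that a monotone network path cannot switch between the four types mid-path; this follows because the terminal stitch of, say, a $\rightarrow$ part is a $J_{-}$ stitch whose only outgoing network parts consistent with staying monotone and not backtracking are again of the same type — more simply, the definition of monotone network path fixes one of the four lists for the entire path.)

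I would then assemble these pieces: by the definition of monotone network path, all segments of $P$ come from a single one of the four lists; by the quadrant-confinement observation the running index never leaves the corresponding closed quadrant; within that quadrant each of the $n$ parts increases $|j_1| + |j_2|$ by exactly $3$; summing gives $|j_1| + |j_2| = 3n$, hence $n = \frac{|j_1| + |j_2|}{3} = d(j)$. The step I expect to require the most care is verifying the quadrant-confinement claim rigorously, i.e. that for each of the four lists the partial sums of the part-vectors, starting from $0$, never leave the relevant closed quadrant — this is immediate for the "all nonnegative" list $\{\uparrow,\rightarrow,\nearrow\}$ but for the mixed-sign lists like $\{\uparrow, \leftarrow, \nwarrow\}$ one must note that the two coordinates move monotonically in \emph{opposite but fixed} directions, so again no sign change of either coordinate occurs. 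Everything else is a one-line computation once the allowed part-vectors are tabulated.
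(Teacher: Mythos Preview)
Your proposal is correct and follows essentially the same approach as the paper: both reduce by symmetry to one monotone type and then use that each network part increments $|j_1|+|j_2|$ by exactly $3$, with the paper packaging this as an induction on $n$ while you sum directly. If anything, your version is slightly more careful, since you make the quadrant-confinement step explicit, whereas the paper silently absorbs it into the identity $d(j'+v)=d(j')+d(v)$.
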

\begin{proof}
With out loss of generality, suppose $0 < j_1$ and $0 \leq j_2$.  When $n = 1$, $P = \nearrow_{(1.5,1.5)} \text{ or } \rightarrow_{(3,0)}$.  In both scenarios, $d(j) =1$.

Fix $n$.
Suppose for any $j\in J$ and any MNP from $I_0$ to $I_j$ with combinatorial length $n$ we have $n = d(j)$.  Let $P$ be a MNP from $I_0$ to $I_j$ of combinatorial length $n+1$.  Write $\prod_{i=1}^{n+1}P_i$ as the part decomposition of $P$. Notice $P' = \prod_{i=1}^{n}P_i$ is an MNP from $I_0$ to $I_{j'}$ for some $j' \in J$ and the combinatorial length of $P'$ is $n$.  Also the final part $P_{n+1}$ is either $\nearrow, \rightarrow, \text{ or } \uparrow$, since $P$ is MNP.  Thus $$j' = j+(-1.5,-1.5), j+(-3,0), \text{ or } j + (0,-3).$$
In all scenarios $$d(j) = 
\begin{Bmatrix} 
    d(j' + (1.5,1.5)) &=& d(j') + d(1.5,1.5)\\
    d(j' + (3,0)) &=& d(j') + d(3,0)\\
    d(j' + (0,3)) &=& d(j') + d(0,3)
\end{Bmatrix} 
=d(j') + 1 = n + 1$$
\end{proof}

\begin{cor}
If $P$ is a monotone network path from $I_0$ to $I_j$ with exactly $D$ diagonals then $P$ has length 
\begin{equation*}
    L(P) = D \sqrt 2 + (d(j) - D) \cdot 2. \tag{C}
\end{equation*}
\end{cor}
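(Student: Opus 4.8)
The statement follows by combining the two results immediately preceding it, so the plan is essentially to chain them together. First I would observe that a monotone network path is in particular a network path: each of its directed segments is one of $\uparrow,\rightarrow,\nearrow,\leftarrow,\nwarrow,\downarrow,\searrow,\swarrow$, and every one of these is a network part in the sense of the network $N$. Hence Proposition \ref{prop-networkPathLength} applies to $P$ verbatim: writing $n$ for the combinatorial length of $P$ and $D$ for its number of diagonals, we get $L(P) = D\sqrt{2} + 2(n-D)$.

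The second ingredient is Lemma \ref{lem-combLength=indexDepth}, which identifies the combinatorial length of a monotone network path from $I_0$ to $I_j$ with the depth $d(j) = \tfrac{|j_1|+|j_2|}{3}$. Substituting $n = d(j)$ into the length formula from the previous paragraph yields $L(P) = D\sqrt{2} + 2(d(j) - D)$, which is exactly equation (C). The only bookkeeping point worth checking explicitly is that the term "diagonals" has the same meaning in Proposition \ref{prop-networkPathLength} and in the statement of the corollary — namely segments of type $\nearrow,\nwarrow,\searrow,\swarrow$ — so that the parameter $D$ is literally the same in both places; this is immediate from the definitions.

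There is no real obstacle here: the corollary is a two-line deduction, and any subtlety lies entirely in confirming that the hypotheses of the two cited results are met (which they are, since "monotone network path from $I_0$ to $I_j$" is precisely the common hypothesis, and "network path" is weaker). I would therefore present the proof as a short paragraph rather than a displayed multi-step argument.

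\begin{proof}
A monotone network path is a network path, so by Proposition \ref{prop-networkPathLength}, if $n$ denotes the combinatorial length of $P$ then $L(P) = D\sqrt{2} + 2(n - D)$. By Lemma \ref{lem-combLength=indexDepth}, $n = d(j)$. Substituting gives $L(P) = D\sqrt{2} + (d(j) - D)\cdot 2$.
\end{proof}
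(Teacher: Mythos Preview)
Your proof is correct and is exactly the approach the paper takes: apply Proposition \ref{prop-networkPathLength} to get $L(P)=D\sqrt{2}+2(n-D)$ and then substitute $n=d(j)$ from Lemma \ref{lem-combLength=indexDepth}.
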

\begin{proof}
Apply Proposition \ref{prop-networkPathLength} and Lemma \ref{lem-combLength=indexDepth}.
\end{proof}

Therefore,  a monotone network path $P$ from $I_0$ to $I_j$, with $0<j_1$ and $0<j_2$,  has the shortest length if it has the largest possible number of diagonals $D$. The maximum number of diagonals can be acquired by considering the largest square inscribed inside the restricting rectangle. This square can fit exactly $$D= \frac{2}{3}\min(|j_1|,|j_2|)$$ diagonals. Notice $$\min(a,b)=\frac{a+b-|a-b|}{2}$$  and recall $$d(j) =\frac{|j_1|+|j_2|}{3} $$ Plugging these values into Formula (C) yields:

\begin{thm}\label{thm:d_Hformula}
The distance between intervals $I_0$ and $I_j$ in $X_H$  is $$d_H(I_0,I_j) = \frac{\sqrt 2}{3}(|j_1|+|j_2|)+ \frac{2 -\sqrt 2}{3}||j_1|-|j_2|| $$ provided $j_1\neq 0$.
When $j_1=0,$ we have 
$$d_H(I_0,I_j)=2\sqrt{2} + \frac{2}{3}|j_2| -2.$$
\end{thm}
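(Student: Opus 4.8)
The plan is to derive the two displayed formulas for $d_H(I_0, I_j)$ directly from the results already assembled in this section, by combining the upper bound coming from Awesome paths with the lower bound coming from Corollary \ref{cor-network}. First I would argue that for any target $I_j$ there is a \emph{shortest} path (a geodesic): the set of candidate paths of bounded length and bounded combinatorial length is compact, so the infimum defining $d_H(I_0,I_j)$ is attained. By Corollary \ref{cor-network}, every such geodesic is a network path, and (invoking the remark just before Lemma \ref{lem-combLength=indexDepth} that geodesics are monotone when $j_1 \neq 0$) in the case $j_1 \neq 0$ it is in fact a monotone network path.

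In the case $j_1 \neq 0$ (with the symmetric reductions $0<j_1$, $0<j_2$ handled by the reflections $R_X, R_Y$, noting the formula depends only on $|j_1|,|j_2|$), I would use Formula (C): a monotone network path from $I_0$ to $I_j$ with $D$ diagonals has length $D\sqrt 2 + 2(d(j)-D)$, which is strictly decreasing in $D$. Hence a geodesic uses the maximum possible number of diagonals. The key combinatorial step — already sketched in the text — is that this maximum is $D = \tfrac{2}{3}\min(|j_1|,|j_2|)$: one cannot exceed $\min$ of the two side-counts of the restricting rectangle, and this many diagonals is realized by the Awesome path $A_j$ (whose length, by the earlier Proposition, equals exactly $\tfrac{2\sqrt2}{3}\min(|j_1|,|j_2|) + \tfrac23\big||j_1|-|j_2|\big|$, which agrees with plugging $D = \tfrac23\min(|j_1|,|j_2|)$ into (C)). Substituting $\min(a,b) = \tfrac{a+b-|a-b|}{2}$ and $d(j) = \tfrac{|j_1|+|j_2|}{3}$ into (C) then yields $d_H(I_0,I_j) = \tfrac{\sqrt2}{3}(|j_1|+|j_2|) + \tfrac{2-\sqrt2}{3}\big||j_1|-|j_2|\big|$.

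For the case $j_1 = 0$, here the geodesic need not be monotone, so I would instead argue as follows. The Awesome path $A_j = \nearrow \uparrow^{j_2/3 - 1} \nwarrow$ gives the upper bound $d_H(I_0,I_j) \le 2\sqrt2 + \tfrac23|j_2| - 2$ by the length formula for Awesome paths. For the matching lower bound, I would note that a geodesic is a network path (Corollary \ref{cor-network}), analyze which network parts can appear in a path between two vertical stitches lying on the same vertical line $\{0\}\times \R$, and check that any such network path from $I_0$ to $I_{(0,j_2)}$ is at least this long — essentially because $I_0$ is a vertical stitch with no horizontal or "straight-up from a vertical" efficiency available from $J^\vert$-to-$J^\vert$ except through a diagonal detour, so the cheapest route out of and into these two stitches costs the two diagonals $\nearrow, \nwarrow$ plus $j_2/3 - 1$ verticals; a short case analysis on the possible first and last network parts closes this.

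The main obstacle I expect is the $j_1 = 0$ lower bound: unlike the $j_1\neq 0$ case, there is no clean "monotone" structure theorem quoted for it, so one must genuinely enumerate the network-path possibilities between two vertically-aligned $J^\vert$-stitches and verify none beats the Awesome path. A secondary, more bookkeeping-level point is making the "maximum number of diagonals is $\tfrac23\min(|j_1|,|j_2|)$" claim rigorous: one should show both that a monotone network path cannot pack in more diagonals than that (a counting argument on net horizontal/vertical displacement, since each diagonal contributes $(\pm1.5,\pm1.5)$ while each horizontal/vertical contributes $(\pm3,0)$ or $(0,\pm3)$, and the total must sum to $j$), and that the Awesome path attains it — the latter being immediate from the earlier length computation. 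Everything else is the routine algebraic substitution into Formula (C) indicated in the text.
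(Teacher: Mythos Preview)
Your approach matches the paper's: invoke Corollary \ref{cor-network} and the monotone-network-path remark, apply Formula (C), maximize $D$, and substitute using $\min(a,b)=\tfrac{a+b-|a-b|}{2}$ and $d(j)=\tfrac{|j_1|+|j_2|}{3}$. For the $j_1=0$ case the paper in fact gives no lower-bound argument at all --- it simply asserts the formula after recording $L(A_j)$ --- so your case-analysis sketch is already more than what appears there.

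One correction: $I_0 = I_{(0,0)}$ lies in $J_H^-$ (since $(0,0)\in 3\Z\times 3\Z$), so it is a \emph{horizontal} stitch, not a vertical one; likewise $I_{(0,j_2)}$ for $j_2\in 3\Z$. Your argument still goes through --- from a horizontal stitch no $\uparrow$ or $\downarrow$ is available (verticals connect only $J_H^\vert$-stitches), so a diagonal is forced at each end and the cheapest filling is $\uparrow^{|j_2|/3-1}$ --- but the labeling should be fixed.
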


\section{\label{sec:tancone}The Tangent Cone at Infinity of $X_H$}
As we mentioned in the introduction, we will use the main theorem in \cite{Sormani-Kazaras-Students1} to prove that tangent cone at infinity of $X_H$ is unique and is a norm vector space with the norm $F:\mathbb{E}^2\rightarrow[0,\infty)$ given in equation \ref{equ:norm}:

$$F(x_1, x_2) = \frac{\sqrt 2 }{3}(|x_1|+|x_2|) + \frac{2-\sqrt 2}{3}||x_1|-|x_2||.$$

The theorem is as follows:
\begin{thm}\label{thm-smocking-R}
Suppose we have an $N$ dimensional smocked space, $(X, d)$, as in Definition~\ref{defn-smock}
 such that
\be
\left|\,\bar{d}(x, x')\,- \,[F(x)-F(x')] \,\right| \,\le \, K \qquad \forall x,x' \in {\mathbb{E}}^N
\ee
where $F: {\mathbb{E}}^N \to [0,\infty)$ is a norm.
Then $(X,d)$ has a unique tangent cone at infinity, 
\be
({\mathbb{R}}^N, d_F) \textrm{ where }
d_F(x,x')=||x-x'||_F=F(x-x').
\ee
\end{thm}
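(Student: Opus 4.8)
The plan is to establish pointed Gromov--Hausdorff convergence of the rescalings $(X, d/R_j, x_0)$ to $(\mathbb{R}^N, d_F, 0)$ for an \emph{arbitrary} sequence $R_j \to \infty$, which simultaneously yields existence and uniqueness of the tangent cone at infinity. Fix a basepoint $x_0 \in \mathbb{E}^N \setminus S$ (or a stitch containing a point we call $x_0$) and let $y_0 = 0 \in \mathbb{R}^N$. The natural candidate for the approximating map is $\varphi_j : \bar B_r^{d/R_j}(x_0) \to \mathbb{R}^N$ sending a point $x \in X$ to $(\pi^{-1}(x) - x_0)/R_j$ (picking any representative in the stitch when $x$ is a stitch; the ambiguity is bounded by $\diam(I_j)$, which is harmless). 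The first step is to record that, since $F$ is a norm, $F(x) - F(x') $ should read $F(x-x')$ throughout — i.e. the hypothesis is really $|\bar d(x,x') - F(x-x')| \le K$ — and then to observe that $\bar d$ descends to a genuine distance comparison: for $p = \pi(v)$, $q = \pi(w)$ in $X$ we have $|d(p,q) - F(v-w)| \le K'$ for a constant $K'$ depending only on $K$ and $\sup_j \diam(I_j)$ (the supremum of stitch diameters is finite because the separation factor $\delta > 0$ forces the stitches to be uniformly small — this needs a one-line justification or a standing assumption).

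Next I would carry out the metric distortion estimate. For $x,x' \in \bar B_r^{d/R_j}(x_0)$ one computes
\[
\bigl| \tfrac{1}{R_j} d(x,x') - d_F(\varphi_j(x),\varphi_j(x')) \bigr|
= \tfrac{1}{R_j}\bigl| d(x,x') - F\bigl(\pi^{-1}(x) - \pi^{-1}(x')\bigr) \bigr|
\le \tfrac{K'}{R_j} \to 0,
\]
uniformly in $x,x'$, which says $\varphi_j$ is a $(K'/R_j)$-isometry onto its image. The second step is the $\varepsilon$-onto (coarse surjectivity) claim: given $z \in \bar B_r^{d_F}(0) \subset \mathbb{R}^N$, I must produce $x \in \bar B_{r+\eta}^{d/R_j}(x_0)$ with $d_F(\varphi_j(x), z)$ small. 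Take $v = x_0 + R_j z \in \mathbb{E}^N$; if $v \notin S$ set $x = \pi(v)$, otherwise nudge $v$ off the smocking set by at most $\sup_j\diam(I_j)$. Then $\varphi_j(x)$ is within $(\sup_j \diam(I_j))/R_j$ of $z$ in $\|\cdot\|_F$, and $\tfrac1{R_j} d(x_0, x) \le \tfrac1{R_j}(F(R_j z) + K') \le r + o(1)$, so $x$ lies in a ball of radius $r + o(1)$. Combining the distortion bound and the coarse surjectivity gives $d_{GH}\bigl(\bar B_r^{d/R_j}(x_0), \bar B_r^{d_F}(0)\bigr) \to 0$ for every $r > 0$, which is exactly Definition \ref{defn-ptGH}; since the limit does not depend on the choice of $R_j \to \infty$, Definition \ref{defn-tan-cone} gives a unique tangent cone at infinity equal to $(\mathbb{R}^N, d_F, 0)$.

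The main obstacle, and the point I would be most careful about, is the interplay between the two slightly different distances in play: the intrinsic smocked distance $d$ on $X$ versus the pseudometric $\bar d$ on $\mathbb{E}^N$, and the fact that $\varphi_j$ is only a \emph{pointed} near-isometry on balls rather than a global one. One must check that balls in $(X, d/R_j)$ and balls in $(\mathbb{R}^N, d_F)$ are matched up to radii that differ by $o(1)$ — this is where the additive constant $K'$ (rather than a multiplicative distortion) is crucial, since after dividing by $R_j$ it vanishes. A secondary technical point is handling stitches cleanly: every estimate above carries an error of order $(\sup_j \diam I_j)/R_j$, so one should either invoke a uniform bound on stitch diameters (true here: the checkered stitches all have length $1$, and in general $\delta>0$ plus local finiteness gives it) or restrict attention to points of $\mathbb{E}^N \setminus S$, which are dense and suffice for computing the GH distance. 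Everything else is a routine $\varepsilon$-net argument once these comparisons are in hand.
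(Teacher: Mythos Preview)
The paper does not prove Theorem~\ref{thm-smocking-R} at all: it is quoted verbatim as ``the main theorem in \cite{Sormani-Kazaras-Students1}'' and then applied as a black box to the checkered space $X_H$. So there is no proof in the paper to compare your proposal against; the comparison you were asked to make is vacuous here.

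That said, your sketch is a reasonable outline of how such a result is typically proved. The heart of the argument is exactly what you wrote: the hypothesis (which, as you correctly flagged, must be read as $|\bar d(x,x') - F(x-x')| \le K$ rather than the literal $F(x)-F(x')$) gives, after dividing by $R_j$, that the rescaled pseudometric differs from $d_F$ by at most $K/R_j$, and this uniform additive error is precisely what drives the pointed GH convergence via an $\varepsilon$-isometry argument. Two cautions. First, your parenthetical that ``$\delta>0$ forces the stitches to be uniformly small'' is false as stated: the separation factor bounds distances \emph{between} distinct stitches, not the diameter of any single stitch, so a uniform bound on $\diam I_j$ is genuinely an extra assumption (in \cite{Sormani-Kazaras-Students1} it enters as the smocking length $L_{\max}$). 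Second, the passage from the pseudometric $\bar d$ on $\mathbb{E}^N$ to the metric $d$ on $X$ is where the smocking depth $h$ and length $L$ enter, and you are right to flag this as the place requiring care; your treatment of it via ``pick any representative and absorb the error'' is correct in spirit once those constants are assumed finite.
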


To check for the conditions of this theorem, we use the following approximation lemma from paper \cite{Sormani-Kazaras-Students1}.

\begin{lem} \label{lem-dil-to-approx}
Given an $N$ dimensional smocked space 
parametrized by points in intervals as in (\ref{param-by-points}),
with smocking depth, $h\in (0,\infty)$,
and smocking length $L=L_{max}\in (0,\infty)$, 
if one can find a Lipschitz
function $F: {\mathbb{E}}^N \to [0, \infty)$ such that
\be
|\,d(I_j, I_{j'})\,- \,[F(j)-F(j')] \,| \,\le \,C, 
\ee
then
\be
|\,\bar{d}(x, x')\,- \,[F(x)-F(x')] \,| \,\le \, 2h+ C + 2 \dil(F) (h+L)
\ee
where $dil(F)$ is the dilation factor or Lipschitz constant of $F$:
\be
\dil(F) = \sup \left\{\frac{|F(a)-F(b)|}{|a-b|}\,:\,\, a\neq b \in {\mathbb E}^N\right\}.
\ee
\end{lem}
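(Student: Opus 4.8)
\textbf{Proof proposal for Lemma \ref{lem-dil-to-approx}.}

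The plan is to bridge from the known two-sided estimate on the distances between stitches, $d(I_j,I_{j'})$, to the corresponding estimate on the pseudometric $\bar d(x,x')$ between arbitrary points of $\mathbb{E}^N$, by controlling the cost of moving from a point $x$ to the ``nearest'' stitch and invoking the triangle inequality together with the Lipschitz bound on $F$. First I would fix $x,x' \in \mathbb{E}^N$ and choose an optimal chain realizing (or nearly realizing) $\bar d(x,x')$; since the smocked space is parametrized by points in intervals, the first and last jumps of such a chain land in stitches $I_j$ and $I_{j'}$ with the geometric feature that $x$ is within distance $h+L$ of the representative point $j\in I_j$ (using that the smocking depth is $h$ and the maximal stitch length is $L$), and similarly for $x'$ and $j'$. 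More precisely, one shows $|\bar d(x,x') - d(I_j,I_{j'})| \le 2h$ by replacing the initial and terminal Euclidean legs of the optimal chain with legs that reach the stitch representatives, each adjustment costing at most $h$; this is the step where the definition of smocking depth as the $h$ such that every point lies within $h$ of the smocking set is used, so that the optimal chain's endpoints are themselves within $h$ of a stitch.

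Next, combining this with the hypothesis $|d(I_j,I_{j'}) - [F(j)-F(j')]| \le C$ gives
\be
|\bar d(x,x') - [F(j)-F(j')]| \le 2h + C.
\ee
It then remains to replace $F(j)-F(j')$ by $F(x)-F(x')$. Here I would write
\be
|[F(j)-F(j')] - [F(x)-F(x')]| \le |F(j)-F(x)| + |F(j')-F(x')| \le \dil(F)\,(|j-x| + |j'-x'|),
\ee
and then bound each of $|j-x|$ and $|j'-x'|$ by $h+L$: indeed, $x$ lies within $h$ of some point $z$ of the stitch $I_j$ that the chain uses, and $z$ lies within $L$ (the stitch length) of the representative point $j$, so $|j-x|\le h+L$. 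Adding everything yields $|\bar d(x,x') - [F(x)-F(x')]| \le 2h + C + 2\dil(F)(h+L)$, which is exactly the claimed estimate.

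The main obstacle I anticipate is the first step: making rigorous the claim that an (almost) optimal chain for $\bar d(x,x')$ can be taken so that its very first and very last jumps go into stitches whose representative points are within $h+L$ of $x$ and $x'$ respectively, and that the endpoint-adjustment only costs $2h$ in total. One has to be careful about the degenerate case where $x$ itself lies in a stitch (so $\bar d$ already collapses it to a point) and about whether the optimal chain has zero jumps (in which case $\bar d(x,x') = |x-x'|$ and one compares directly with $F$ via the Lipschitz bound). A clean way to handle this uniformly is to note that for any $x$ there is a stitch $I_{j}$ with $\dist(x, I_j) \le h$, prepend/append the cheap segments connecting $x$ to $I_j$ and $x'$ to $I_{j'}$, and argue that the resulting modification of any competitor chain changes its length by at most $2h$ while turning it into a chain between stitches; taking the infimum gives the $2h$ bound in both directions. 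Everything else is the triangle inequality and the definition of $\dil(F)$, which are routine.
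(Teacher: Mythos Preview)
The paper does not prove this lemma at all; it simply quotes it from \cite{Sormani-Kazaras-Students1} and then applies it. So there is no proof in the present paper to compare against.

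That said, your sketch is correct, and in fact the ``clean way'' you describe in the last paragraph is the entire argument---the earlier discussion of picking an optimal chain and tracking its first and last jumps is unnecessary and is the source of the obstacle you worry about. Concretely: by the definition of smocking depth, for any $x\in\mathbb{E}^N$ there is a stitch $I_j$ and a point $z\in I_j$ with $|x-z|\le h$; since $\bar d$ is a pseudometric and $\bar d(x,z)\le |x-z|$, the triangle inequality gives
\[
|\bar d(x,x') - d(I_j,I_{j'})| \le \bar d(x,z) + \bar d(x',z') \le 2h
\]
directly, with no need to inspect which stitches a minimizing chain actually visits, and with the degenerate zero-jump case handled automatically. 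The remaining steps---applying the hypothesis to pick up $C$, and then using $|x-j|\le |x-z|+|z-j|\le h+L$ (here is where ``parametrized by points in intervals'' and the smocking length $L$ enter) together with the Lipschitz bound on $F$---are exactly as you wrote them.
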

In what follows, we show that $F$ as defined by equation \ref{equ:norm} is a norm that satisfies the conditions of theorem \ref{thm-smocking-R}.

\begin{lem}
$F$ is a norm.
\end{lem}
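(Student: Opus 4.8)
The plan is to verify the three defining properties of a norm on $\mathbb{E}^2$ for the function
\[
F(x_1,x_2) = \frac{\sqrt 2}{3}(|x_1|+|x_2|) + \frac{2-\sqrt 2}{3}\big||x_1|-|x_2|\big|,
\]
namely (i) positivity and definiteness ($F(x)\ge 0$, with $F(x)=0$ iff $x=0$), (ii) absolute homogeneity ($F(\lambda x)=|\lambda|F(x)$), and (iii) the triangle inequality.

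First I would dispatch homogeneity, which is immediate: each of the building blocks $|x_1|$, $|x_2|$, and $\big||x_1|-|x_2|\big|$ is absolutely homogeneous of degree one, so any nonnegative linear combination of them is as well; here both coefficients $\tfrac{\sqrt2}{3}$ and $\tfrac{2-\sqrt2}{3}$ are strictly positive, which is worth noting since it also feeds into positivity. For definiteness, observe that $F$ is a sum of nonnegative terms, so $F(x)\ge 0$, and if $F(x)=0$ then in particular $\tfrac{\sqrt2}{3}(|x_1|+|x_2|)=0$, forcing $x_1=x_2=0$; conversely $F(0)=0$. The sign conditions on the coefficients make this clean.

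The triangle inequality is the main obstacle and deserves the most care. My preferred approach is to recognize $F$ as a positive combination of two known norms, exploiting that a nonnegative combination of norms (or seminorms) is again a norm provided at least one summand is definite. Concretely, write $F = \alpha\,\|\cdot\|_1 + \beta\,g$ with $\alpha=\tfrac{\sqrt2}{3}$, $\beta=\tfrac{2-\sqrt2}{3}$, where $\|x\|_1=|x_1|+|x_2|$ is the usual $\ell^1$ norm and $g(x)=\big||x_1|-|x_2|\big|$. The point $\|\cdot\|_1$ already satisfies the triangle inequality, so it suffices to show $g$ is a seminorm, i.e. $g(x+y)\le g(x)+g(y)$. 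One way: note $g(x)=\big|\,|x_1|-|x_2|\,\big| = \max\{|x_1|,|x_2|\} - \min\{|x_1|,|x_2|\}$, and more usefully $g(x) = \sup_{\varepsilon_1,\varepsilon_2\in\{\pm1\},\ \varepsilon_1\varepsilon_2=-1}(\varepsilon_1 x_1 + \varepsilon_2 x_2)$ — wait, that gives $|x_1|+|x_2|$; instead use $g(x)=\max\{\,|\,x_1-x_2\,|,\,|\,x_1+x_2\,|\,\} - \ldots$, so let me instead argue directly: $\big||a|-|b|\big|$ as a function of $(a,b)$ equals $\max\{|a-b|,|a+b|\}$ is false too. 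The robust route is: $g(x)=\big||x_1|-|x_2|\big|$ is the composition of the $1$-Lipschitz (for $\|\cdot\|_\infty$, hence for any norm) map $(x_1,x_2)\mapsto(|x_1|,|x_2|)$ followed by $(u,v)\mapsto|u-v|$, but composition does not preserve subadditivity in general, so instead I will simply verify $g(x+y)\le g(x)+g(y)$ by cases on signs, using $\big||a|-|b|\big|\le\big||a'|-|b'|\big| + \big|(|a|-|a'|)-(|b|-|b'|)\big|$ and the elementary bound $\big||a+c|-|a|\big|\le|c|$. Then $F=\alpha\|\cdot\|_1+\beta g$ is subadditive as a nonnegative sum of subadditive functions, and combined with (i) and (ii) it is a norm.

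Alternatively, and perhaps more transparently for the reader, I would present $F$ in terms of the rotated coordinates $u = x_1 - x_2$, $v = x_1 + x_2$: a short computation gives $|x_1|+|x_2| = \max\{|u|,|v|\}$ and $\big||x_1|-|x_2|\big| = \min\{|u|,|v|\}$, so that
\[
F(x) = \frac{\sqrt2}{3}\max\{|u|,|v|\} + \frac{2-\sqrt2}{3}\min\{|u|,|v|\} = \tfrac13\big(|u|+|v|\big) + \tfrac{\sqrt2 - 1}{3}\big(\max\{|u|,|v|\} - \min\{|u|,|v|\}\big),
\]
i.e. $F$ is, up to the linear change of variables $(x_1,x_2)\mapsto(u,v)$, a positive combination of $\|\cdot\|_1$ and $\|\cdot\|_\infty$ on $\mathbb{E}^2$ — explicitly $F = \tfrac{\sqrt2 - 1}{3}\|(u,v)\|_\infty + \big(\tfrac13 - \tfrac{\sqrt2-1}{3}\big)\cdot\tfrac12\|(u,v)\|_1$ after checking the coefficient $\tfrac13 - \tfrac{\sqrt2-1}{3} = \tfrac{2-\sqrt2}{3} > 0$ is positive — and since $\|\cdot\|_1$, $\|\cdot\|_\infty$ are norms and a linear image of a norm under an invertible map is a norm, the triangle inequality and the other axioms follow at once. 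I expect the only real care needed is the bookkeeping to confirm all the coefficients in whichever decomposition I use are nonnegative; the conceptual content is just "a positive combination of norms is a norm."
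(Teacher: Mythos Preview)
Your first approach---writing $F = \alpha\|\cdot\|_1 + \beta g$ with $g(x)=\big||x_1|-|x_2|\big|$ and arguing that $g$ is a seminorm---cannot be completed: $g$ is \emph{not} subadditive. Take $x=(1,1)$ and $y=(1,-1)$; then $g(x)=g(y)=0$ but $g(x+y)=g(2,0)=2$. So the decomposition $F=\alpha\|\cdot\|_1+\beta g$ does not by itself yield the triangle inequality, and any case-by-case verification of subadditivity for $g$ must fail somewhere. (The inequality you wrote, $\big||a|-|b|\big|\le\big||a'|-|b'|\big|+\big|(|a|-|a'|)-(|b|-|b'|)\big|$, is not the triangle inequality for $g$ on $\mathbb{E}^2$: the problem is precisely that $|x_i+y_i|$ need not equal $|x_i|+|y_i|$.)

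Your second route does work, up to a bookkeeping slip. In the rotated coordinates $u=x_1-x_2$, $v=x_1+x_2$, the identities $|x_1|+|x_2|=\max\{|u|,|v|\}$ and $\big||x_1|-|x_2|\big|=\min\{|u|,|v|\}$ give
\[
F=\frac{\sqrt2}{3}\|(u,v)\|_\infty+\frac{2-\sqrt2}{3}\bigl(\|(u,v)\|_1-\|(u,v)\|_\infty\bigr)
=\frac{2\sqrt2-2}{3}\,\|(u,v)\|_\infty+\frac{2-\sqrt2}{3}\,\|(u,v)\|_1,
\]
a positive combination of two norms pulled back along an invertible linear map; that suffices. (Your displayed coefficients were off by a factor of $2$.) The paper takes a different route for the triangle inequality: it observes that the sublevel set $S=F^{-1}([0,1])$ is a convex regular octagon and then checks directly, via the Minkowski-functional description $F(v)=\inf\{t>0: v/t\in S\}$, that convexity of $S$ forces $F(u+v)\le F(u)+F(v)$. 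Both arguments ultimately rest on the same geometry---the unit ball is an octagon---but yours exhibits $F$ explicitly as a sum of known norms, while the paper invokes convexity of the ball.
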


\begin{proof}
\begin{enumerate}
    \item Note, for any $v\in \mathbb{E}^2$, $F(v)\geq 0$. Also, if $F(v) = 0$ then $|v_1| + |v_2| = 0$, and thus $v=0$.
    \item Notice, for any $a\in \mathbb{R}$ and any $v\in \mathbb{E}^2$,  we have
    \begin{align*}
        F(av) &= \frac{\sqrt 2 }{3}(|av_1|+|av_2|) + \frac{2-\sqrt 2}{3}||av_1|-|av_2|| \\&= |a|(\frac{\sqrt 2 }{3}(|v_1|+|v_2|) + \frac{2-\sqrt 2}{3}||v_1|-|v_2||) = |a|F(v).
    \end{align*}
     \item Let $S=F^{-1}([0,1])$. Note that $S$ is a convex regular octagon. Observe that for any $v\in\mathbb{E}^2$, we have $F(v)=\inf A_v$ where $A_v$ is defined to be $A_v = \{t\in \mathbb{R}_+ \vert \tfrac{v}{t}\in S\}$.  Notice that for any $u,v\in \mathbb{E}^2$, we have that $A_u + A_v \subset A_{u+v}$:
     \begin{itemize}
         \item[] 
         Let $t\in A_u$ and $t' \in A_v$. Then $\tfrac{u}{t},\tfrac{v}{t'}\in S$.  Since $S$ is convex and $0\leq \tfrac{t}{t+t'}\leq 1$, we have $\frac{u+v}{t+t'} = \left(\frac{t}{t+t'}\right)\left(\frac{u}{t}\right) + \left(1 - \frac{t}{t+t'}\right)\left(\frac{v}{t'}\right)\in S$. Hence $t+t'\in A_{u+v}$. 
     \end{itemize}
     Thus, $F(u+v) = \inf A_{u+v} \leq \inf (A_u + A_v) = \inf A_u + \inf A_v = F(u)+F(v)$.
\end{enumerate}
\end{proof}

\begin{lem}\label{assumption1}
We have $|d_H(I_j, I_{j'}) - [F(j)-F(j')]|\leq 2\sqrt{2}-2$.
\end{lem}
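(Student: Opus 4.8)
The plan is to reduce the general inequality for $d_H(I_j,I_{j'})$ to the two explicit distance formulas in Theorem~\ref{thm:d_Hformula} by exploiting translation invariance of the smocking pattern and the near-additivity of $F$. First I would observe that the pattern $X_H$ is invariant under the lattice of translations generated by $(3,0)$ and $(0,3)$, and in fact $J_H^-$ and $J_H^\vert$ are each cosets of this lattice. Fix $j,j'\in J_H$. If $j$ and $j'$ lie in the same coset (both in $J_H^-$ or both in $J_H^\vert$), then $j-j'\in 3\Z\times 3\Z$, the translation $T$ by $j'$ is an isometry of $X_H$ carrying $I_{j'}$ to $I_0$ and $I_j$ to $I_{j-j'}$ (here using $R_D$ first if $j,j'\in J_H^\vert$, exactly as in Definition~\ref{def-generalized-Awesome}), so $d_H(I_j,I_{j'})=d_H(I_0,I_{j-j'})$. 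In the harder mixed case ($j\in J_H^-$, $j'\in J_H^\vert$ or vice versa) I would instead route through a nearby index in the same coset as $j$: pick $k\in J_H^-$ adjacent to $j'$ (say $k=j'+(1.5,1.5)$), use the triangle inequality $|d_H(I_j,I_{j'})-d_H(I_j,I_k)|\le d_H(I_k,I_{j'})$, and note $d_H(I_k,I_{j'})=\tfrac{\sqrt2}{2}$ is a bounded error; similarly $|F(j')-F(k)|\le\dil(F)\cdot\tfrac{3\sqrt2}{2}$ is bounded, so it suffices to control the same-coset case up to these fixed constants. (One should check the resulting constant still fits under $2\sqrt2-2$, or absorb the mixed case into the statement's slack by bookkeeping; this is where the constant-chasing lives.)

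So the core estimate is: for $j\in 3\Z\times 3\Z$, $|d_H(I_0,I_j) - [F(j)-F(0)]| = |d_H(I_0,I_j)-F(j)|\le 2\sqrt2-2$. When $j_1\neq 0$, Theorem~\ref{thm:d_Hformula} gives $d_H(I_0,I_j)=\tfrac{\sqrt2}{3}(|j_1|+|j_2|)+\tfrac{2-\sqrt2}{3}\bigl||j_1|-|j_2|\bigr|$, which is \emph{exactly} $F(j)$ — so the difference is $0$ in this case. When $j_1=0$ (so $j=(0,j_2)$ with $j_2\in3\Z$), Theorem~\ref{thm:d_Hformula} gives $d_H(I_0,I_j)=2\sqrt2+\tfrac23|j_2|-2$, while $F(0,j_2)=\tfrac{\sqrt2}{3}|j_2|+\tfrac{2-\sqrt2}{3}|j_2|=\tfrac23|j_2|$. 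Hence the difference is $2\sqrt2-2$ exactly, and in particular $\le 2\sqrt2-2$. The case $j=0$ is trivial. This establishes the same-coset bound with the sharp constant $2\sqrt2-2$.

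The main obstacle is the mixed-coset case: unlike the aligned case, there is no isometry of $X_H$ taking $I_{j'}$ to $I_0$ when $j'\in J_H^\vert$ and $j\in J_H^-$, because the two sub-lattices are not exchanged by a pattern symmetry fixing the other. I expect the clean fix is the triangle-inequality detour above, but one must verify that the accumulated additive error (the detour cost $\tfrac{\sqrt2}{2}$ from $d_H$, plus $\dil(F)$ times the displacement from the $F$-side) does not exceed the stated bound $2\sqrt2-2$; if it does, the honest statement is that the bound holds with the same constant in the aligned case and with some explicit larger constant in the mixed case, and Lemma~\ref{lem-dil-to-approx} only needs \emph{some} constant $C$, so either reading suffices for Theorem~\ref{thm-smocking-R}. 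I would therefore present the aligned computation in full (it is exact, modulo the $j_1=0$ line), then dispatch the mixed case by the detour, flagging that only a uniform bound is needed downstream.
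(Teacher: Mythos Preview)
Your computation of $|d_H(I_0,I_j)-F(j)|\le 2\sqrt2-2$ from Theorem~\ref{thm:d_Hformula} is correct and is also the paper's first step. From there, however, the paper is much shorter than your plan: it uses no translation invariance and no coset case-split whatsoever. Having the sandwich $F(j)\le d_H(I_0,I_j)\le F(j)+(2\sqrt2-2)$ for \emph{every} $j\in J_H$ at once (both sublattices, straight from the two cases of Theorem~\ref{thm:d_Hformula}), the paper simply routes through $I_0$ via the triangle inequality for $d_H$:
\[
F(j)\;\le\; d_H(I_0,I_j)\;\le\; d_H(I_0,I_{j'})+d_H(I_j,I_{j'})\;\le\; d_H(I_j,I_{j'})+F(j')+(2\sqrt2-2),
\]
and rearranges. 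That is the entire argument; there is no constant-chasing.

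Consequently your ``main obstacle'' is not one: since the paper never tries to carry $I_{j'}$ to $I_0$ by an isometry, the mixed-coset case requires no separate handling and no detour. As an aside, your claim that no pattern-preserving isometry sends $I_{j'}$ to $I_0$ when $j'\in J_H^\vert$ is also incorrect: the map $(a,b)\mapsto (b-j_2',\,a-j_1')$, i.e.\ $R_D$ followed by a lattice translation exactly as in Definition~\ref{def-generalized-Awesome}, does this and sends $j\in J_H^-$ to a point of $J_H^\vert$, where the first case of Theorem~\ref{thm:d_Hformula} then applies with zero error. So even within your own framework the mixed case goes through cleanly without the detour and without enlarging the constant. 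What your approach would actually deliver, if completed, is the stronger estimate $|d_H(I_j,I_{j'})-F(j-j')|\le 2\sqrt2-2$, comparing $d_H$ to the norm of the \emph{difference} rather than to the difference of the norms; the paper neither needs nor proves that refinement.
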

\begin{proof}
Recall  that in corollary \ref{thm:d_Hformula}, we proved $$d_H(I_0,I_j) = 
\begin{cases}
      \frac{\sqrt 2}{3}(|j_1|+|j_2|)+ \frac{2 -\sqrt 2}{3}||j_1|-|j_2|| & \text{if } j_1 \neq 0,\\
      2\sqrt{2} + \frac{2}{3}|j_2| -2 & \text{if } j_1 = 0.\\
\end{cases}$$

Hence $$F(j) \leq d_H(I_0,I_j) \leq F(j) +2\sqrt{2}-2.$$

Therefore $$F(j) \leq d_H(I_0,I_j) \leq d_H(I_0,I_{j'}) + d_H(I_j,I_{j'}) \leq  d_H(I_j,I_{j'}) + F(j') +2\sqrt{2}-2$$ and hence $|d_H(I_j, I_{j'}) - [F(j)-F(j')]|\leq 2\sqrt{2}-2$ as required.
\end{proof}

\begin{lem}\label{assumption2}
$F: \mathbb{E}^2 \rightarrow [0,\infty)$ is Lipschitz.
\end{lem}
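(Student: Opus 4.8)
The plan is to show that $F$ has a finite dilation constant by bounding $|F(a)-F(b)|$ above by a constant multiple of $|a-b|$ for all $a,b\in\mathbb{E}^2$. The cleanest route is to observe that $F$ is built from the functions $a\mapsto|a_1|+|a_2|$ and $a\mapsto\big||a_1|-|a_2|\big|$, each of which is a composition/combination of coordinate projections, absolute values, and sums, hence Lipschitz with respect to the Euclidean norm. Since $F$ is a nonnegative linear combination of these two Lipschitz functions with coefficients $\frac{\sqrt2}{3}$ and $\frac{2-\sqrt2}{3}$, it is itself Lipschitz, and $\dil(F)$ is at most $\frac{\sqrt2}{3}\dil(g_1)+\frac{2-\sqrt2}{3}\dil(g_2)$ where $g_1,g_2$ are those two pieces.

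More concretely, I would first record the elementary estimate $\big||s|-|t|\big|\le|s-t|$ for real $s,t$, apply it coordinatewise, and use the triangle inequality together with $|s|\le\sqrt{s^2}\le|a|$ for $a=(a_1,a_2)$ to get $|g_1(a)-g_1(b)|\le|a_1-b_1|+|a_2-b_2|\le\sqrt2\,|a-b|$. For $g_2$, two applications of the reverse triangle inequality for absolute values give $\big|g_2(a)-g_2(b)\big|=\big|\,\big||a_1|-|a_2|\big|-\big||b_1|-|b_2|\big|\,\big|\le\big|\,|a_1|-|b_1|\,\big|+\big|\,|a_2|-|b_2|\,\big|\le|a_1-b_1|+|a_2-b_2|\le\sqrt2\,|a-b|$. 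Plugging these in yields $|F(a)-F(b)|\le\left(\frac{\sqrt2}{3}+\frac{2-\sqrt2}{3}\right)\sqrt2\,|a-b|=\frac{2\sqrt2}{3}|a-b|$, so $F$ is Lipschitz with $\dil(F)\le\frac{2\sqrt2}{3}$, which suffices.

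Alternatively, and perhaps more in the spirit of the preceding lemma, one can note that the previous lemma already established that $S=F^{-1}([0,1])$ is a convex regular octagon containing the origin in its interior; any gauge (Minkowski functional) of a bounded convex set with $0$ in its interior is Lipschitz, with dilation constant $1/r$ where $r$ is the inradius of $S$. Since the octagon $S$ contains a Euclidean ball of some explicit radius $r>0$ about the origin, $F\le\frac1r|\cdot|$, and combined with the already-proven positive homogeneity and triangle inequality this gives the Lipschitz bound directly.

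I do not expect any genuine obstacle here: both pieces of $F$ are manifestly Lipschitz, and the only mild care needed is making sure the reverse triangle inequality is applied correctly to the nested absolute values in $g_2$ and that the passage from the $\ell^1$ bound $|a_1-b_1|+|a_2-b_2|$ to the Euclidean bound $\sqrt2\,|a-b|$ is stated (via Cauchy–Schwarz). This lemma is essentially a routine verification whose only purpose is to supply the hypothesis $\dil(F)<\infty$ needed to invoke Lemma~\ref{lem-dil-to-approx}.
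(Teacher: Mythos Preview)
Your proposal is correct and follows essentially the same approach as the paper: the paper also bounds $F(x)-F(y)$ by applying the reverse triangle inequality to both the $|x_1|+|x_2|$ and the $\big||x_1|-|x_2|\big|$ pieces, collects terms to get $(\alpha+\beta)(|x_1-y_1|+|x_2-y_2|)$ with $\alpha=\tfrac{\sqrt2}{3}$, $\beta=\tfrac{2-\sqrt2}{3}$, and then passes to the Euclidean norm with a factor $\sqrt2$ to obtain the identical constant $k=\tfrac{2\sqrt2}{3}$. Your alternative Minkowski-functional remark is a valid extra observation but is not used in the paper's proof.
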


\begin{proof}
We need to show $\forall x,y \in \mathbb{E}^2$, $$\frac{|F(x)-F(y)|}{|x-y|} \le k,$$ for a constant $k$ independent of $x$ and $y$.

Without loss of generality, we may assume $F(y)\leq F(x)$.

Let $\alpha = \frac{\sqrt{2}}{3}$ and $\beta= \frac{2-\sqrt2}{3}$. Then,
$$F(x) = \alpha (|x_1|+|x_2|) + \beta ||x_1|-|x_2||,$$
and
$$F(y) = \alpha (|y_1|+|y_2|) + \beta ||y_1|-|y_2||.$$
Therefore,
\begin{align*}
F(x) - F(y) &= \alpha (|x_1|-|y_1|) + \alpha (|x_2|-|y_2|)+ \beta (||x_1|-|x_2||- ||y_1|-|y_2||) \\
&\leq \alpha (|x_1|-|y_1|) + \alpha (|x_2|-|y_2|)+ \beta (||x_1|-|x_2|- |y_1|+|y_2||) \\
&= \alpha (|x_1|-|y_1|) + \alpha (|x_2|-|y_2|)+ \beta (||x_1|-|y_1| +|y_2|-|x_2||) \\
&\leq \alpha (|x_1|-|y_1|) + \alpha (|x_2|-|y_2|)+ \beta ||x_1|-|y_1|| +\beta||x_2|-|y_2|| \\
&\leq \alpha |x_1-y_1| + \alpha|x_2-y_2|+ \beta |x_1-y_1| +\beta|x_2-y_2| \\
&= (\alpha + \beta )|x_1-y_1| + (\alpha + \beta )|x_2-y_2| \\
&= (\alpha + \beta )(|x_1-y_1| +|x_2-y_2|) \\
&\leq  (\alpha + \beta)|x-y|\sqrt 2.\
\end{align*} 
Hence, $k=\frac{2\sqrt{2}}{3}.$

\end{proof}

\begin{lem} \label{dil-lemma} We have 
$dil(F) \leq \frac{2+\sqrt2}{3}$.
\end{lem}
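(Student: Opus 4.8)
The plan is to compute $\dil(F)$ exactly rather than just estimate it, since the value $\tfrac{2+\sqrt2}{3}$ appearing in the statement looks like it should be sharp. Recall that for a norm $F$ the dilation factor is $\sup_{v\neq 0} F(v)/|v|$, because by homogeneity and the triangle inequality $|F(a)-F(b)|\le F(a-b)$, and conversely setting $b=0$ shows the sup over all pairs equals the sup over $v=a-b$ with $b=0$. So it suffices to bound $F(v)/|v|$ for all $v\ne 0$.

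Next I would exploit the symmetry of $F$. Since $F(v_1,v_2)=F(|v_1|,|v_2|)=F(|v_2|,|v_1|)$, I may restrict to the sector $v_1\ge v_2\ge 0$, on which $F(v)=\alpha(v_1+v_2)+\beta(v_1-v_2)=(\alpha+\beta)v_1+(\alpha-\beta)v_2$ with $\alpha=\tfrac{\sqrt2}{3}$, $\beta=\tfrac{2-\sqrt2}{3}$; note $\alpha+\beta=\tfrac23$ and $\alpha-\beta=\tfrac{2\sqrt2-2}{3}$. Parametrizing $v=(\cos\theta,\sin\theta)$ with $\theta\in[0,\tfrac\pi4]$, the ratio becomes $g(\theta)=\tfrac23\cos\theta+\tfrac{2\sqrt2-2}{3}\sin\theta$, which is of the form $A\cos\theta+B\sin\theta$ with $A,B>0$, hence equals $\sqrt{A^2+B^2}\,\cos(\theta-\theta_0)$ for some $\theta_0\in(0,\tfrac\pi4)$; its maximum over $\mathbb R$ is $\sqrt{A^2+B^2}$. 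I would check that $\theta_0$, given by $\tan\theta_0=B/A=\sqrt2-1$, indeed lies in $[0,\tfrac\pi4]$ (it does, since $0<\sqrt2-1<1$), so the maximum is attained inside the sector. Then $\dil(F)=\sqrt{A^2+B^2}=\tfrac13\sqrt{4+(2\sqrt2-2)^2}=\tfrac13\sqrt{4+8-8\sqrt2+4}=\tfrac13\sqrt{16-8\sqrt2}$. A short simplification $16-8\sqrt2=8(2-\sqrt2)$ and the identity $(2+\sqrt2)^2=6+4\sqrt2$ should let me verify $\tfrac13\sqrt{16-8\sqrt2}\le\tfrac{2+\sqrt2}{3}$, i.e. $16-8\sqrt2\le 6+4\sqrt2$, i.e. $10\le 12\sqrt2$, which is clear; in fact the two differ, so the bound in the lemma is a genuine (slightly lossy) upper bound and I will note this.

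An alternative, purely algebraic route avoids trigonometry: on the sector I want $(\alpha+\beta)v_1+(\alpha-\beta)v_2\le \tfrac{2+\sqrt2}{3}\sqrt{v_1^2+v_2^2}$. Squaring (both sides nonnegative) and using Cauchy–Schwarz, $\big((\alpha+\beta)v_1+(\alpha-\beta)v_2\big)^2\le\big((\alpha+\beta)^2+(\alpha-\beta)^2\big)(v_1^2+v_2^2)$, so it is enough that $(\alpha+\beta)^2+(\alpha-\beta)^2\le\big(\tfrac{2+\sqrt2}{3}\big)^2$. This is the single numerical inequality $\tfrac49+\tfrac{(2\sqrt2-2)^2}{9}\le\tfrac{6+4\sqrt2}{9}$, i.e. $4+12-8\sqrt2\le 6+4\sqrt2$, i.e. $10\le12\sqrt2$, true. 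I expect the main (only mild) obstacle to be bookkeeping the algebra of nested $\sqrt2$'s and making sure the Cauchy–Schwarz step is applied on the correct sector where the signs of the coefficients are controlled; the geometric content is routine. I would present the Cauchy–Schwarz version as the proof, with a remark that equality in $\dil(F)$ is strictly smaller, namely $\tfrac13\sqrt{16-8\sqrt2}$, if a sharp value is ever needed downstream.
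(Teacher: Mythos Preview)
Your proof is correct and takes a genuinely different route from the paper. The paper argues by ``dilation calculus'': it splits $F$ into its two summands and applies $\dil(f+g)\le\dil(f)+\dil(g)$ and $\dil(f\circ g)\le\dil(f)\,\dil(g)$ to the pieces, never using that $F$ is a norm. You instead exploit the norm property to reduce $\dil(F)$ to $\sup_{v\ne0}F(v)/|v|$, linearize $F$ on the symmetry sector $v_1\ge v_2\ge0$, and finish with Cauchy--Schwarz (equivalently, the $A\cos\theta+B\sin\theta$ maximization). Your approach actually yields the exact value $\dil(F)=\tfrac13\sqrt{16-8\sqrt2}\approx0.72$, well below the lemma's bound $\tfrac{2+\sqrt2}{3}\approx1.14$; this sharper constant could be fed directly into the estimate following Lemma~\ref{lem-dil-to-approx}. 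As a side remark, the paper's chain of inequalities implicitly uses $\dil\big(a\mapsto|a_1|-|a_2|\big)\le1$, but on $\mathbb{E}^2$ this map has dilation $\sqrt2$ (take $a=(1,0)$, $b=(0,1)$); your argument avoids this issue entirely, as does the paper's own Lemma~\ref{assumption2}, which already gives $\dil(F)\le\tfrac{2\sqrt2}{3}$.
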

\begin{proof}
Since $dil(f+g)\leq dil (f) + dil(g)$ and $dil(|x|)= 1$, and $dil(f\circ g)= dil(f) dil(g)$, we have:

\begin{eqnarray*}
dil(F)&=& dil(\frac{\sqrt{2}}{3} (|a_1|+|a_2|) + \frac{2-\sqrt{2}}{3}||a_1|-|a_2||)\\
&\leq& dil(\frac{\sqrt{2}}{3} (|a_1|+|a_2|)) +dil(\frac{2-\sqrt{2}}{3}||a_1|-|a_2||)\\
&=&  \frac{\sqrt{2}}{3} dil( |a_1|+|a_2|) +\frac{2-\sqrt{2}}{3} dil(||a_1|-|a_2||)\\
&\leq& \frac{\sqrt{2}}{3} dil( |a_1|)+dil(|a_2|) +\frac{2-\sqrt{2}}{3} dil(|\cdot|)dil(|a_1|-|a_2|)\\
&\leq& \frac{2\sqrt{2}}{3}  +\frac{2-\sqrt{2}}{3}= \frac{2+\sqrt2}{3}.
\end{eqnarray*}
\end{proof}

Lemmas \ref{assumption1} and \ref{assumption2} satisfy the assumption of Lemma \ref{lem-dil-to-approx}. By Lemma \ref{lem-dil-to-approx} we have $$|\bar{d}_H(x,x') - [F(x)-F(x')]| \leq 2h_H + C + 2 dil(F) (h_H+L_H),$$  where $h_H= 1.5$, $C = 2\sqrt{2}-2$ and $L_H = 1$.  Combining with \ref{dil-lemma}, we get $$|\bar{d}_H(x,x') - [F(x)-F(x')]| \leq 2(1.5) + (2\sqrt{2}-2) + 2 (\frac{2+\sqrt2}{3}) (1.5+1) = \frac{18+11\sqrt{2}}{3}.$$

Finally, by Theorem \ref{thm-smocking-R}, we get that the H-smocking space $(X_H, d_H)$ has a unique tangent cone at infinity $(\mathbb{E}^2, d_F)$ where $d_F(x,x')=F(x-x')$.

{\bf Acknowledgements:}   We are grateful to Professor Christina Sormani and Dr. Demetre Kazaras for supervising our project and their many helpful comments. We gratefully acknowledge the {\em Graduate Center of the City University of 
New York} and the {\em Simons Center for Geometry and Physics at Stony Brook University} for providing space where we could meet to conduct this research.
\bibliographystyle{plain}
\bibliography{checkeredpaper.bbl}

\end{document}